\theoremstyle{theorem}
\newtheorem{theorem}{Theorem}
\theoremstyle{theorem}
\newtheorem{remark}{Remark}
\newtheorem{assumption}{Assumption}
\newtheorem{corollary}[theorem]{Corollary}
\theoremstyle{definition}
\newtheorem{example}{Example}
\theoremstyle{theorem}
\newtheorem{problem}{Problem}
\newtheorem{lemma}[theorem]{Lemma}
\theoremstyle{definition}
\newtheorem{definition}{Definition}
\theoremstyle{proof}
\theoremstyle{proposition}
\newtheorem{proposition}[theorem]{Proposition}
\tikzset{
	->, 
	>=stealth', 
	initial text=$ $, 
}
\definecolor{purple}{rgb}{1,0.5,1}
\definecolor{green}{rgb}{0.5,1,0.5}
\definecolor{red}{rgb}{1,0.5,0.5}
\title{ \vspace{0.25in} Least Inferable Policies for Markov Decision Processes}
\author{Mustafa O. Karabag\thanks{ M. O. Karabag is with the Department of Electrical and Computer Engineering, University of Texas at Austin. e-mail: karabag@utexas.edu }, Melkior Ornik\thanks{M. Ornik is with the Institute for Computational Engineering and Sciences, University of Texas at Austin. e-mail: mornik@ices.utexas.edu}, Ufuk Topcu\thanks{U. Topcu is with the Department of Aerospace Engineering and Engineering Mechanics and the Institute for Computational Engineering and Sciences, University of Texas at Austin. e-mail: utopcu@utexas.edu} }
\begin{document}
	\onecolumn
	\maketitle
	
	\begin{abstract}
		In a variety of applications, an agent's success depends on the knowledge that an adversarial observer has or can gather about the agent's decisions. It is therefore desirable for the agent to achieve a task while reducing the ability of an observer to infer the agent's policy. We consider the task of the agent as a reachability problem in a Markov decision process and study the synthesis of policies that minimize the observer's ability to infer the transition probabilities of the agent between the states of the Markov decision process. We introduce a metric that is based on the Fisher information as a proxy for the information leaked to the observer and using this metric formulate a problem that minimizes expected total information subject to the reachability constraint. We proceed to solve the problem using convex optimization methods. To verify the proposed method, we analyze the relationship between the expected total information and the estimation error of the observer, and show that, for a particular class of Markov decision processes, these two values are inversely proportional.
	\end{abstract}
	
	\theoremstyle{definition}
	
	\section{Introduction}
	We consider a setting in which an agent is supposed to accomplish a task in a stochastic environment while an observer that is potentially adversarial tries to infer the characteristics of the agent's behavior. For a scenario where predictable behaviors may put the success of the task at risk, it is crucial for an agent to conceal its strategy. In this paper, we study the synthesis of policies that enable an agent to achieve its task while limiting the ability of the observer to infer.
	
	We model the behavior of the agent by a Markov decision process (MDP). The agent follows a policy to achieve its objective, for example, reaching a set of target states with high probability. This policy determines the transition probabilities of the agent between the states of the MDP. The observer can observe the transitions of the agent at a subset of the states and, solely based on the observed transitions, infers the agent's transition probabilities at these states. As a counter objective, the agent aims to choose its policy such that it limits the ability of the observer to infer the transition probabilities in addition to achieving the agent's task with high probability. 
	
	A policy can limit the ability of the observer to infer by minimizing the amount of information on the transition probabilities that the observer can gather from each observed transition. We introduce a metric, \textit{transition information}, to measure the amount of information that a single transition leaks to the observer. This metric is related to the Fisher information which measures the amount of information that a random variable has on a parameter \cite{frieden2004science}. An observer that is trying to estimate the parameter would have high expected estimation error if the random variable has low Fisher information on the parameter. The notion of transition information generalizes the Fisher information by providing a scalar value describing the information leaked for the agent's transition that is parametrized by the transition probabilities. 
	
	While the notion of transition information is appropriate for a single observed transition, we also need to consider the effect of the number of observed transitions on the ability of the observer to infer. A policy that solely minimizes the transition information for each observed state adjusts the transition probabilities to the successor states as close to each other as possible since the uniform distribution of the successor states minimizes the transition information for a state. However, this approach might increase the number of observed transitions, and the observer may be able to infer the transition probabilities due to high number of observed transitions. Hence a policy that minimizes the ability of the observer to infer the transition probabilities must also take into account the number of observed transitions and balance the number of observed transitions and the transition information of each observed transition.
	
	We account for the two quantities of interest, the number of observed transitions and the transition information of each observed transition, through a unified notion of \textit{expected total information} --- the expected sum of transition informations over a path generated by the agent's policy. We propose to compute a policy that has the minimum expected total information subject to the constraint that the task of the agent is completed with high probability. 
	
	To the best of our knowledge, the proposed method is the first policy synthesis method that uses the Fisher information for planning in MDPs against an adversary.  The method introduced in \cite{alpcan2015information} uses the Fisher information for learning and control in unknown systems that are modeled by MDPs. However, in contrast to our approach, \cite{alpcan2015information} aims to increase the information gathered from transitions. A-optimality criterion \cite{emery1998optimal} for experiment design aims to minimize the total variance of estimators by minimizing the trace of the inverse Fisher information matrix. The transition information is the reciprocal of the trace of inverse Fisher information matrix. In contrast, by minimizing the transition information, we aim to maximize the total variance of estimators unlike A-optimality criterion.	In terms of the use of Fisher information, the closest works to the method proposed in this paper are \cite{farokhi2017optimal} and \cite{farokhi2017fisher}. The methods introduced in \cite{farokhi2017optimal} and \cite{farokhi2017fisher} use the Fisher information to preserve privacy for database systems and smart meters, respectively, and they do not deal with MDPs. Planning in stochastic control settings in the presence of an adversary has been substantially explored previously; the works closest to our paper are \cite{agmon2008multi, paruchuri2006security, savas2018entropy}. The reference \cite{agmon2008multi} provides a method for multi-agent perimeter patrolling scenarios and is not applicable to MDPs in general. Papers \cite{paruchuri2006security, savas2018entropy} propose to randomize the policy of an agent by maximizing the entropy of an induced stochastic process. While, for an MDP, increasing the entropy of a process increases randomness of the paths, it does not necessarily limit the ability of an observer to infer the transition probabilities. 
	
	The rest of the paper is organized as follows. Section \ref{section:preliminaries} provides necessary background on the proposed method. In Section \ref{section:probstatement}, the definition of information and the problem formulation are presented. Section \ref{section:methodology} includes the methodology to synthesize the policy that has minimum expected total information subject to a reachability constraint by convex optimization problems. In Section \ref{section:errorbound}, we show the relationship between considered problems and estimation errors of the observer. We present numerical examples in Section \ref{section:examples} and conclude with suggestions for the future work in Section \ref{section:conclusion}. We discuss some special cases of the proposed method in Appendix \ref{appendix:unobservedmec} and give the proofs for the technical results of this paper in Appendix \ref{appendix:proofs}.

	\section{Preliminaries} \label{section:preliminaries}
	In this section, we present some of the concepts and notation used in the rest of the paper.
	
	We use $[n]$ for the set $\lbrace{ 1,\ldots, n \rbrace}$. For a finite set $D$, we denote the power set with $2^{D}$ and cardinality with $|D|$. $\mathbb{E}[\Theta]$ denotes the expectation of the random variable $\Theta$ and $\text{Var}(\Theta)$ denotes the variance of $\Theta$ which is $\mathbb{E}\left[ (\Theta - \mathbb{E}[\Theta])(\Theta - \mathbb{E}[\Theta])^T \right]$. We use $\mathds{1}_{D}$ for the indicator function of a set $D$ where $\mathds{1}_{D}(x)=1$ if $x \in D$ and  $\mathds{1}_{D}(x)=0$ otherwise.
	\subsection{Markov Decision Processes}
	A \textit{Markov decision process} is a tuple $\mathcal{M} = (S, \mathcal{A}, \mathcal{P}, s_0)$ where $S$ is a finite set of states, $\mathcal{A}$ is a finite set of actions, $\mathcal{P}: S \times \mathcal{A} \times S \to [0,1]$ is the transition probability function, and $s_0$ is the initial state. We denote $\mathcal{P}(s,a,q)$ by $\mathcal{P}_{s,a,q}$. $\mathcal{A}(s)$ denotes the set of available actions at state $s$ where $\sum_{q \in S} \mathcal{P}_{s,a,q} = 1$ for all $a \in \mathcal{A}(s)$. We denote the successor states of state $s$ by $Succ(s)$ such that a state $q \in S$ if and only if there exists an action $a$ such that $\mathcal{P}_{s,a,q} > 0 $.  A state $s$ is \textit{absorbing} if it has only a single successor state that is itself, i.e.,  $\mathcal{P}_{s,a,s}=1$ for all $a \in \mathcal{A}(s)$.   
	
	A \textit{sub-MDP} $(C,D)$ of $\mathcal{M}$ is a pair where $C \subseteq S$ is non-empty and $D: C \to 2^{\mathcal{A}}$ is a function such that $a \ in D(s)$ only if $\mathcal{P}_{s,a,q} = 0$ for all $q \not \in C$. An \textit{end component} is a sub-MDP $(C,D)$ of $\mathcal{M}$ such that the digraph induced by $(C,D)$ is strongly connected. An end component $(C,D)$ is \textit{closed} if, for all $s \in C$, $Succ(s) \setminus C = \emptyset$. A \textit{maximal end component} $(C,D)$ is an end component where there is no end component $(C', D')$ such that $(C, D) \neq (C', D')$, $C \subseteq C'$, and $D \subseteq D'$.
	
	A \textit{policy} is a sequence $\pi = [\mu_0, \mu_1, \ldots]$ where each $\mu_t:S \times \mathcal{A} \to [0,1]$ is a function such that $\sum_{a \in \mathcal{A}(s)} \mu_t(s,a) = 1$ for every $s \in S$. A \textit{stationary policy} $\pi$ is a  a sequence $\pi = [\mu, \mu, \ldots]$ where $\mu:S \times \mathcal{A} \to [0,1]$ is a function such that $\sum_{a \in \mathcal{A}(s)} \mu(s,a) = 1$ for every $s \in S$. We denote the set of all policies by $\Pi(\mathcal{M})$ and the set of stationary policies by $\Pi^{St}(\mathcal{M})$. For a stationary policy $\pi$, we denote $\mu(s,a)$ by $\pi_{s,a}$. A stationary policy $\pi$ induces \textit{a Markov chain} $\mathcal{M}^{\pi} = (S, \mathcal{P}^{\pi},  s_0)$  from $\mathcal{M}$ where $\mathcal{P}^{\pi}:S \times S \to [0,1]$ is the transition probability function such that $$ \mathcal{P}^{\pi}(s,q) := \sum_{a \in \mathcal{A}(s)} \pi_{s,a} \ \mathcal{P}_{s,a,q}$$ for all $s,q \in S$. We denote $\mathcal{P}^{\pi}(s,q)$ by $\mathcal{P}^{\pi}_{s,q}$.

	A \textit{path} $\xi = s_0 s_1 s_2 \ldots$ is an infinite sequence of states under policy $\pi= [\mu_0, \mu_1, \ldots]$ such that $\sum_{a \in \mathcal{A}(s_t)} \mathcal{P}_{s_t,a,s_{t+1}} \mu_t(s_t,a) > 0$ for all $t\geq0$. The set of paths for $\mathcal{M}$ under policy $\pi$ is denoted by $Paths(\mathcal{M}^{\pi})$.
	
	
	The reachability probability to the set $B$ of states, i.e., the probability of reaching a state $b \in B$ under policy $\pi$, is denoted by $\Pr^{\pi} (Reach [B])$. 
	
	The \textit{expected state residence time} at state $s$ is defined by $$x^{\pi}_{s} := \sum_{t=0}^{\infty} \Pr (s_t = s | s_0),$$ where $s_t$ is the state at time $t$. The expected state residence time $x^{\pi}_{s}$ is also equal to $ \mathbb{E}[N_{s,\xi}]$ where $N_{s,\xi}$ is the number of appearances of $s$ in the random path $\xi$ that is generated by the policy $\pi$. The \textit{expected state-action residence time} at state $s$ and action $a$ is defined by $$x^{\pi}_{s,a} := \sum_{t=0}^{\infty} \Pr (s_t = s | s_0) \mu_t(s_t,a).$$ The expected state-action residence time of a state and an action is the expected number of times that the action is taken at the state. For a stationary policy $\pi \in \Pi^{St}(\mathcal{M})$, $x^{\pi}_{s,a} = \pi_{s,a} \ x^{\pi}_{s}$.
	

	\subsection{The Fisher Information and the Cram\'er-Rao Bound}
	Let the random variable $X$ represent the observed data from a random variable that is parametrized by $\Theta \in \mathbb{R}^n$. An \textit{estimator} is a function  $\hat{\Theta}: X \to \mathbb{R}^n$ that estimates $\Theta$ based on observed data. The estimator $\hat{\Theta}$ is an \textit{unbiased estimator} of $\Theta$ if $\mathbb{E}[\hat{\Theta}] - \Theta = 0.$ 
	
	The \textit{precision} of a random variable is the reciprocal of the variance of the random variable. For an unbiased estimator, its precision is the reciprocal of the mean squared error (MSE) of the estimator.
	
	The \textit{Fisher information} \cite{lehmann2006theory} $I_X(\theta)$ of a discrete random variable $X$ parametrized by $\theta \in \mathbb{R}$ is $$I_X(\theta) := - \sum_{x \in Supp(X)} \frac{\partial^2 \log(\Pr(X=x |\theta))}{\partial \theta^2} \Pr(X=x |\theta).$$ An important property of the Fisher information is additivity, that is, when the samples are drawn from i.i.d. random variables, the Fisher information based on $n$ samples $I_{X^n}(\theta)$ satisfies $I_{X^n}(\theta) = n I_{X}(\theta)$ where $I_{X}(\theta)$  is the Fisher information of one sample.
	
	The \textit{Cram\'er-Rao inequality} \cite{lehmann2006theory} defines a relationship between the variance of an unbiased estimator of parameter $\theta$ and the Fisher information on the parameter $\theta$. The inequality is stated as 
	\begin{equation} \label{cramerraoinequality}
	\text{Var}(\hat{\theta}) \geq I_X(\theta)^{-1}
	\end{equation} 		
	where $\hat{\theta}$ is any unbiased estimator of $\theta$.
	
	An unbiased estimator is \textit{efficient} if it achieves the Cram\'er-Rao bound.

	\section{Problem Statement} \label{section:probstatement}
	\label{information}
	
	Consider an agent whose behavior is governed by a Markov decision process (MDP) $\mathcal{M} = (S, \mathcal{A}, \mathcal{P}, s_0)$ where a stationary policy followed by the agent $\pi$ implemented on this MDP induces a Markov chain. An adversary which we call \textit{observer} observes the transitions and tries to infer the transition probabilities for a set $W$ of states in the induced Markov chain. We assume that the observer can only observe the transitions at the states in $W$ which we call \textit{observed states}, and has no side information.
	
	The problem we study is the synthesis of a policy for the agent with two  objectives: (i) reach a set $C_{reach}$ of states with probability higher than a given threshold $0 \leq \nu_{reach} \leq 1$ and (ii) minimize the amount of information leaked to the observer. 
	
	For the first objective, we assume that the transitions of the agent after reaching $C_{reach}$ are irrelevant, i.e., every state in $C_{reach}$ is absorbing and is not observed.
	
	For the second objective, we define the notion of \textit{transition information} to measure the amount of information leaked to the observer due to a transition.   
	\begin{definition}
		The \textit{transition information} of a state $s$ is defined by
		\begin{equation} \label{def:infogeneral}
		\iota^{\pi}_s := \frac{1}{\sum_{q \in Succ(s)} I_{Q}(\mathcal{P}^{\pi}_{s,q})^{-1}}
		\end{equation}
		where $Q$ is the random variable that is the successor state of state $s$. 
	\end{definition}
	
	We remark that the Fisher information and the transition information are analogous:
	\begin{itemize}
		\item The reciprocal of the Fisher information is a lower bound on the variance of an unbiased estimator for a single parameter.
		\item The reciprocal of the transition information is a lower bound on the variance of an unbiased estimator for many parameters.
	\end{itemize}
	For a state $s$, consider an unbiased estimator $\hat{\mathcal{P}_s}$ of transition probabilities. The reciprocal of the transition information $\iota^{\pi}_{s}$ is a lower bound on the variance of $\hat{\mathcal{P}_s}$: $$\text{Var}(\hat{\mathcal{P}_s}) \geq \frac{1}{\iota^{\pi}_s}.$$ 
	
	We use the transition information to define the \textit{total information} of a path. The total information of a path $\xi=s_0 s_1 s_2 \ldots$ is defined as the sum of each observed transition's transition information such that $$\iota^{\pi}_{W,\xi} := \sum_{t=0}^{\infty} \mathds{1}_{W}(s_t) \iota^{\pi}_{s_t}.$$ We then state the synthesis problem formally as follows:
	\theoremstyle{problem}
	\begin{problem}[Synthesis of Minimum-Information Admissible Policies] \label{problem}
		Given an MDP $\mathcal{M}=(S,\mathcal{A},\mathcal{P},s_0)$, a set $C_{reach}$ of states, a probability threshold $\nu_{reach}$, and the set $W$ of observed states, compute
		\begin{subequations}
			\label{problem:mininfleakage}
			\begin{align}
			&\underset{ \pi \in \Pi^{St}(\mathcal{M})}{\inf}
			& &   \mathbb{E} [ \iota^{\pi}_{W,\xi} ] \label{eq:pathprobmininf},
			\\
			& \text{s. t.}
			& & {\Pr} ^{\pi}(Reach[C_{reach}] ) \geq \nu_{reach} \label{cons:reachreq}
			\end{align}
		\end{subequations}
		where $\xi$ is a random path generated under policy $\pi$. If the optimal value is attainable, compute the optimal policy $\pi^*$.
	\end{problem}
	Hereafter we call the policies that satisfy the reachability constraint \textit{admissible policies} and an optimal policy for Problem \ref{problem} a \textit{minimum-information admissible policy}.


	\begin{example} \label{example:succstate}
		We explain the characteristics of a minimum-information admissible policy through the MDP given in Figure \ref{fig:example1} with $\nu_{reach}=0$ for simplicity.
		
		\begin{figure}[h] 
			\centering
			\begin{tikzpicture}
			\node[state, initial]  (s0) {$s_0^o$};
			\node[state] [above right=of s0] (s1) {$s_1^o$};
			\node[state] [right=of s1] (s2) {$s_2$};
			\node[state] [below right=of s1] (s3) {$s_3$};
			\draw 
			(s0) edge node[above, sloped] {$\alpha,1$} (s1)
			(s0) edge node[below, sloped] {$\beta,1$} (s3)
			(s1) edge node[above, sloped] {$\alpha,1$} (s2)
			(s1) edge node[above, sloped] {$\beta,1$} (s3)
			(s2) edge[loop right] node[right] {$\alpha,1$} (s2)
			(s3) edge[loop right] node[right] {$\alpha,1$} (s3);
			\end{tikzpicture}
			\caption{An MDP with 4 states. A label $a,p$ of a transition refers to the transition that happens with probability $p$ when action $a$ is taken. The states marked with the superscript $o$ are observed.}
			\label{fig:example1}
		\end{figure}
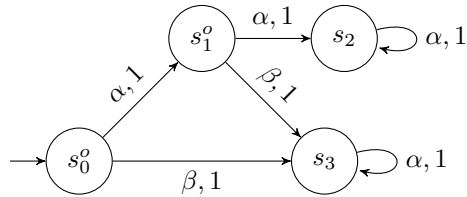
		
		The goal of the agent is to find a policy that minimizes the expected total information. Consider the policy at state $s_1$ and note that the policy decision at $s_1$ does not affect information leaked from state $s_0$ since it does not change the expected residence time at $s_0$. Hence we may only consider the information leaked from $s_1$. If the agent chooses a deterministic policy, the observer can estimate the transition probabilities with no error even after observing a single transition, which means infinite leaked information. Therefore, it is expected that the agent randomizes the transition probabilities. Formally, we explain the reasoning by the fact that the Fisher information is minimized for a $Ber(p)$ random variable with $p=0.5$. Similarly at state $s_0$, the agent randomizes the transition probabilities. However, unlike $s_1$, the policy at $s_0$ affects the information leaked from $s_1$. As the agent decreases the probability of taking action $\alpha$ at state $s_0$, the expected number of visits to state $s_1$ decreases and consequently information leaked from $s_1$ decreases. Hence, the agent must take the action $\beta$ with a greater probability than the action $\alpha$. On the other hand, taking the action $\beta$ with high probability increases the information leaked from $s_0$. We expect that, under this trade-off, the agent must choose a policy that takes both actions, but the action $\beta$ more likely. Numerically, the optimal policy is $\pi_{s_0, \alpha} = 0.38$, $\pi_{s_0, \beta} = 0.62$, $\pi_{s_1, \alpha} = 0.5$, and $\pi_{s_1,\beta} = 0.5$. 
	\end{example}

	\begin{remark}
		Note that if the transition probabilities are not constant and change between observations, measurement of inference with a transition information is not meaningful since we assume underlying probability distribution is constant. To have a well-defined problem, we only focus on agents that have to follow stationary policies and we search the optimal policies only in the stationary policies.
	\end{remark}
	
	\section{Synthesis of Minimum-Information Admissible Policies}  \label{section:methodology}
	
	For an MDP $\mathcal{M}=(S, \mathcal{A}, \mathcal{P},s_0)$, we aim to find a minimum-information admissible policy $\pi$ that minimizes the expected total information of a path subject to the reachability constraint $\Pr^{\pi}(Reach[C_{reach}]) \geq \nu_{reach}$ where the set of observed states is $W$. In this section, we represent the expected total transition transition information in terms of expected state-action residence times, show the existence of a minimum-information admissible policy, and give an optimization problem whose solution is a minimum-information admissible policy. We also show that the proposed optimization problem is convex in the expected state-action residence time parameters and hence can be solved using off-the-shelf convex optimization tools. 
	
	Note that the Fisher information for a parameter is well-defined if the regularity conditions are satisfied. These conditions require that the distributions depending on the parameter have a common support that is independent of the parameter \cite{lehmann2006theory}. For a random variable $P \sim Ber(p)$, the Fisher information is not defined when $p = 0$ or $p = 1$ since the probability distribution of $P$ does not have a common support. However, such a case practically corresponds to infinite Fisher information, which means that the value of the parameter is estimated exactly even after a single observation. We assume that the Cram\'er-Rao lower bound is zero if the Fisher information is infinite. 
	
	Consider a state $w \in W$ whose successor state is denoted by the random variable $Q$. For each $q \in Succ(w)$, we have  $$ I_{Q}(\mathcal{P}^{\pi}_{w,q}) =I_{\mathds{1}_{q}(Q)}(\mathcal{P}^{\pi}_{w,q})=\frac{1}{\mathcal{P}^{\pi}_{w,q}(1-\mathcal{P}^{\pi}_{w,q})}$$ where $\mathds{1}_{q}(Q)$ is a $Ber(\mathcal{P}^{\pi}_{w,q})$ random variable. The transition information of a state $w$ is a function
	\begin{equation*}
		\iota_w: \lbrace \mathcal{P}_w \in \mathbb{R}^{|Succ(w)|}:
		\sum_{q \in Succ(w)}\mathcal{P}_{w,q} =1, \mathcal{P}_{w,q} \geq 0 \rbrace \to \mathbb{R} \cup \lbrace{ \infty \rbrace}
	\end{equation*} and, under policy $\pi$, is equal to
	\begin{subequations}
		\begin{align}
		\iota^{\pi}_{w} = \left( \sum_{q \in Succ(w)} \mathcal{P}^{\pi}_{w,q} (1- \mathcal{P}^{\pi}_{w,q}) \right)^{-1}. \label{def:info}
		\end{align} 
	\end{subequations}
	
	\begin{remark}
		The categorical random variable $Q$ has the distribution $\mathcal{P}^{\pi}_{w,q}$ where $q \in Succ(w)$. The covariance matrix $\Sigma$ of $Q$ has diagonal entries $\mathcal{P}^{\pi}_{w,q}(1-\mathcal{P}^{\pi}_{w,q})$. The transition information of state $w$ given in \eqref{def:info} is also equal to $ \normalfont \text{tr}(\Sigma)^{-1}$. Since $Q$ is a categorical random variable, a sample mean estimator achieves the Cram\'er-Rao bound for a single transition. However, since the observed data consists of transitions from a path and the transitions are not independent in general, a sample mean estimator is not necessarily unbiased and efficient.  
	\end{remark}
	
	We now construct the optimization problem whose solution gives the expected state-action residence times for a minimum-information admissible policy. First, we rewrite \eqref{def:info} as
		\begin{equation}
	\iota^{\pi}_{w} = \left ( \sum_{q \in Succ(w)} \left (  \sum_{a \in \mathcal{A}(w)} \frac{ x^{\pi}_{w,a}}{\sum_{a' \in \mathcal{A}(w)}x^{\pi}_{w,a}} \mathcal{P}_{w,a,q} \right ) \left ( 1-\sum_{a \in \mathcal{A}(w)} \frac{ x^{\pi}_{w,a'}}{\sum_{a' \in \mathcal{A}(w)}x^{\pi}_{w,a'}}  \mathcal{P}_{w,a,q} \right ) \right )^{-1} \label{eq:restimeprob}
		\end{equation}
	using the definitions of the induced Markov chain and expected state-action residence times.

	We assume that the optimal value of Problem \ref{problem} is finite. If the optimal value is infinite any admissible policy is a minimum-information admissible policy.
	
	\begin{proposition} \label{proposition:replace}
		For an MDP $\mathcal{M}$, if $\mathbb{E} [ \iota^{\pi}_{W,\xi} ]$ is finite where $\xi$ is a path generated randomly under a policy $\pi \in \Pi^{St}(\mathcal{M})$, then $$\mathbb{E} [ \iota^{\pi}_{W,\xi} ] = \sum_{w \in W}x^{\pi}_{w} \iota^{\pi}_{w}.$$
	\end{proposition}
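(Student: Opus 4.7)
The plan is to reduce the proposition to an application of Tonelli's theorem, exploiting the non-negativity of transition information together with the stationarity of $\pi$. First I would note that, because $\pi$ is stationary, the transition probabilities $\mathcal{P}^{\pi}_{s,q}$ and hence $\iota^{\pi}_s$ depend only on the state $s$ and not on time; consequently, for every $t$, the quantity $\mathds{1}_{W}(s_t)\, \iota^{\pi}_{s_t}$ is a fixed non-negative (measurable) function of the random state $s_t$.

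Second, since $\iota^{\pi}_{W,\xi} = \sum_{t=0}^{\infty} \mathds{1}_{W}(s_t)\, \iota^{\pi}_{s_t}$ is a series of non-negative random variables, Tonelli's theorem (equivalently, monotone convergence applied to partial sums) permits exchanging expectation and the infinite sum. I would then expand each expectation by conditioning on $s_t$ and invoke the definition of the expected state residence time:
\[
\mathbb{E}[\iota^{\pi}_{W,\xi}] \;=\; \sum_{t=0}^{\infty} \mathbb{E}\!\left[\mathds{1}_{W}(s_t)\, \iota^{\pi}_{s_t}\right] \;=\; \sum_{t=0}^{\infty} \sum_{w \in W} \iota^{\pi}_w \Pr(s_t = w \mid s_0).
\]
A second appeal to Tonelli (again justified by non-negativity) swaps the order of summation, and the inner sum collapses by definition to $x^{\pi}_w$, yielding
\[
\mathbb{E}[\iota^{\pi}_{W,\xi}] \;=\; \sum_{w \in W} \iota^{\pi}_w \sum_{t=0}^{\infty} \Pr(s_t = w \mid s_0) \;=\; \sum_{w \in W} x^{\pi}_w\, \iota^{\pi}_w,
\]
which is the desired identity.

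The main subtlety, and the place where the finiteness hypothesis is used, is that $\iota^{\pi}_w$ may be $+\infty$ at a state $w$ whose induced successor distribution is degenerate. Adopting the standard measure-theoretic convention $0\cdot\infty = 0$, the hypothesis $\mathbb{E}[\iota^{\pi}_{W,\xi}] < \infty$ forces $x^{\pi}_w = 0$ at every such $w \in W$ (otherwise the left-hand side would already be infinite), so every term $x^{\pi}_w\, \iota^{\pi}_w$ appearing on the right is well-defined and finite. Apart from this bookkeeping about degenerate transitions, the argument is a routine application of Tonelli's theorem together with the definition of $x^{\pi}_w$.
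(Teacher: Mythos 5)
Your proof is correct and follows essentially the same route as the paper's: both reduce $\mathbb{E}[\iota^{\pi}_{W,\xi}]$ to $\sum_{w\in W}\mathbb{E}[N_{w,\xi}]\,\iota^{\pi}_w$ by exchanging expectation with a non-negative series and invoking the definition of $x^{\pi}_w$, yours by summing over time and applying Tonelli, the paper's by grouping visits per state. The only difference is bookkeeping: the paper first explicitly rules out reachable states with $\iota^{\pi}_w=\infty$ and recurrent observed states before computing, whereas you absorb these degenerate cases into the $0\cdot\infty=0$ convention and let the finiteness hypothesis force $x^{\pi}_w=0$ there, which is equally valid.
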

	
%
	
	Note that the expected total information $x^{\pi}_{w} \iota^{\pi}_{w}$ of a state $w$ has some undefined points on the domain $x^{\pi}_{w,a}\geq 0 $ where $a \in \mathcal{A}(w)$. We define the function at such points as follows:  
	\begin{itemize} \label{assumption:infleakage}
		
		\item  If the expected state residence time is zero, i.e.,  $x^{\pi}_{w} = \sum_{a \in \mathcal{A}(w)} x^{\pi}_{w,a} = 0$, then $x^{\pi}_{w} \iota^{\pi}_{w} := 0$. Since the state will never be visited, the observer cannot get information on the transition probabilities.
		
		\item  If $w$ deterministically transitions to one of the successor states and expected residence time is greater than zero, i.e., there exists a state $q\in Succ(w)$ such that $\sum_{a \in \mathcal{A}(w)} x^{\pi}_{w,a} \mathcal{P}_{w,a,q} > 0$ and $\sum_{a \in \mathcal{A}(w)} x^{\pi}_{w,a} \mathcal{P}_{w,a,q'} = 0$ for all $q' \in Succ(w) \setminus q$, then $x^{\pi}_{w} \iota^{\pi}_{w} :=\infty$. Since the observer can estimate the transition probabilities even after a single observation and there is a positive probability that the state will be visited, the expected total information is infinite.
		
		\item If the expected state residence time at $w$ is infinite, i.e., $x^{\pi}_{w} = \sum_{a \in \mathcal{A}(w)} x^{\pi}_{w,a} = \infty$, then $x^{\pi}_{w} \iota^{\pi}_{w} :=\infty$. Since the observed distribution of transitions converges to the transition probabilities,  the expected total information is infinite.
	\end{itemize}
	
	We represent the stationary policies of the agent with a set of constraints which use the expected state-action residence times. A stationary policy makes each state either recurrent or transient. We need to identify the states that can be reachable and recurrent. If a policy leaks finite information, a set of states can be reachable and recurrent if and only if they belong to an end component and are not observed since the recurrence of a reachable observed state results in infinite expected total information. 
	
	\begin{definition}
		An \textit{unobserved end component} (UEC) is a sub-MDP $(C,D)$ such that the digraph induced by $(C,D)$ is strongly connected and $C \cap W = \emptyset$.	An \textit{unobserved maximal end component} (UMEC) $(C,D)$ is a UEC where  $C \subset S$  and there is no UEC $(C', D')$ such that $(C, D) \neq (C', D')$, $C \subseteq C'$, and $D \subseteq D'$.	
	\end{definition}
	
	We denote the set of states that belong to some UMEC by  $C_{end}$. After reaching $C_{end}$, the agent can follow a stationary policy that always stays in the UMEC and leaks no more information. For example, $s_2$ is a UMEC state in Figure \ref{fig:endcomponents}. However, due to the reachability constraints the agent might need to follow a policy that leaves a UMEC. We disallow such cases and make the following assumption to ensure that the agent does not leave UMECs.
	\begin{assumption} \label{assumption:unobservedmec}
		All unobserved maximal end components are closed.
	\end{assumption}

	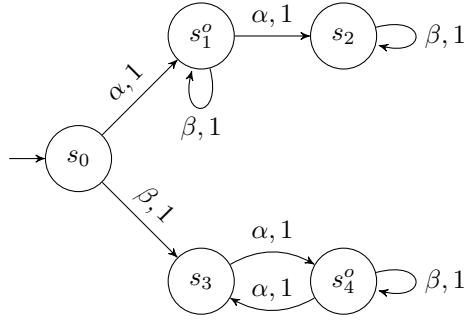
\begin{figure}[h] 
		\centering
		\begin{tikzpicture} 
		\node[state, initial]  (s0) {$s_0$};
		\node[state] [below right=of s0] (s3) {$s_3$};
		\node[state] [right=of s3] (s4) {$s_4^o$};
		\node[state] [above right=of s0 ] (s1) {$s_1^o$};
		\node[state] [right=of s1] (s2) {$s_2$};
		\draw 
		(s0) edge node[above, sloped] {$\alpha,1$} (s1)
		(s0) edge node[above, sloped] {$\beta,1$} (s3)
		(s1) edge node[above, sloped] {$\alpha,1$} (s2)
		(s2) edge[loop right] node[right] {$\beta,1$} (s2)
		(s3) edge[bend left] node[above, sloped] {$\alpha,1$} (s4)
		(s4) edge[bend left] node[above, sloped] {$\alpha,1$} (s3)
		(s1) edge[loop below] node[below] {$\beta,1$} (s1)
		(s4) edge[loop right] node[right] {$\beta,1$} (s4);
		\end{tikzpicture}
		\caption{An MDP with 5 states. A label $a,p$ of a transition refers to the transition that happens with probability $p$ when action $a$ is taken. The states marked with the superscript $o$ are observed.}
		\label{fig:endcomponents}
	\end{figure}

	\begin{remark}
		In the absence of Assumption \ref{assumption:unobservedmec}, to find the optimal stationary policy, one needs to check every UEC to determine whether the agents needs to stay or leave the UEC. Such a check increases computational complexity of finding a minimum-information admissible policy. For clarity of presentation, we here adopt Assumption \ref{assumption:unobservedmec}. In Appendix \ref{appendix:unobservedmec}, we investigate the more general problem without Assumption \ref{assumption:unobservedmec}.
	\end{remark}

	The optimal value of Problem \ref{problem} is 
	\begin{subequations}
		\label{pr:mininfleakagemodified}
		
		\begin{align}
		\inf \ &  \sum_{w \in W } x^{\pi}_{w} \iota^{\pi}_{w}  \label{eq:mininfrestime}
		\\
		\normalfont \text{s. t. } \ &  x^{\pi}_{s} = \sum_{a \in \mathcal{A}(s)} x^{\pi}_{s,a}, & & \forall s \in S \setminus C_{end} \label{cons:restimedefs}
		\\ 
		&  x^{\pi}_{s,a} \geq 0, & &
		\forall s \in S \setminus C_{end}, \ 
		\forall a \in \mathcal{A}(s) \label{cons:positiveactions}
		\\
		& \sum_{a \in \mathcal{A}(s)} x^{\pi}_{s,a} - \sum_{q \in S}  \sum_{a \in \mathcal{A}(q)} x^{\pi}_{q,a}\mathcal{P}_{q,a,s} = \mathds{1}_{s_0}(s),  & & \forall s \in S \setminus C_{end} \label{cons:floweqn}  
		\\
		& \sum_{q \in C_{reach}} \ \sum_{s \in S \setminus C_{end}}  \sum_{a \in \mathcal{A}(s)}  x^{\pi}_{s,a}\mathcal{P}_{s,a,q}  + \mathds{1}_{s_0}(q) \geq \nu_{reach}, \label{cons:reach}
		\end{align}
	\end{subequations}
	where the decision variables are $x^{\pi}_{s,a}$ for all $s \in S \setminus C_{end}$ and $a \in \mathcal{A}(s)$. The objective function \eqref{eq:mininfrestime} follows from Proposition \ref{proposition:replace} and the constraints \eqref{cons:restimedefs}-\eqref{cons:positiveactions} follow from definitions of expected residence times. The constraint \eqref{cons:floweqn} is the flow equation indicating that the expected number of arrivals into a state, i.e., the inflow, is equal to the expected number of departures from the state, i.e., the outflow. These equations ensure that there exists a policy that gives the computed  expected state-action residence times \cite{etessami2007multi}. The reachability constraint in \eqref{cons:reachreq} is equivalent to \eqref{cons:reach}.

	Note that some stationary admissible policies are infeasible for the optimization problem given in \eqref{pr:mininfleakagemodified}. In detail, the stationary policies that eventually always stay in an end component and visit an observed state infinitely often are infeasible. For instance, consider a policy $\pi$ such that $\Pr^{\pi}(Reach[s_2])= 0.5$ for the MDP given in Figure \ref{fig:endcomponents} with the reachability constraint $\Pr(Reach[s_2] \geq 0.5)$. While $\pi$ leads to infinite expected total information and satisfies the reachability constraint, it is not feasible for the problem in $\eqref{pr:mininfleakagemodified}$. One can easily check the existence of a policy that satisfies the reachability constraint via model checking tools such as \cite{kwiatkowska2002prism}. If there exists a policy that satisfies the task constraints, but the optimization problem given in \eqref{pr:mininfleakagemodified} is infeasible, we can say that the minimum-information admissible policy leaks infinite information.
	
	\begin{proposition} \label{proposition:infmin}
		If there exists a policy $\pi \in \Pi^{St}(\mathcal{M})$ that satisfies the reachability constraint given in \eqref{cons:reachreq}, then there exists a policy $\pi^* \in \Pi^{St}(\mathcal{M})$ that attains the optimal value of the optimization problem given in \eqref{pr:mininfleakagemodified}.
	\end{proposition}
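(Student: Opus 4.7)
The plan is to recast the minimization as an optimization over the compact space of stationary policies and apply the extreme value theorem for lower semicontinuous functions. I identify $\Pi^{St}(\mathcal{M})$ with the product of probability simplices $\prod_{s \in S} \Delta(\mathcal{A}(s))$, a compact subset of Euclidean space. The admissible set $\Pi_{ad} := \{\pi \in \Pi^{St}(\mathcal{M}) : \Pr^{\pi}(Reach[C_{reach}]) \geq \nu_{reach}\}$ is closed, since the reachability probability to the absorbing set $C_{reach}$ in a finite MDP depends continuously on $\pi$ (it is the unique solution of a linear system on the non-absorbing states). By hypothesis $\Pi_{ad}$ is non-empty, hence a non-empty compact set.

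Next I would show that the objective $J : \Pi^{St}(\mathcal{M}) \to [0, +\infty]$ defined by $J(\pi) := \sum_{w \in W} x^{\pi}_{w} \iota^{\pi}_{w}$, with degenerate values interpreted per the conventions following Assumption~\ref{assumption:unobservedmec}, is lower semicontinuous. For each $w \in W$, the factor $x^{\pi}_{w}$ is continuous on the open set of policies under which $w$ is transient (the expected hitting count is the $(s_{0}, w)$-entry of $(I - \mathcal{P}^{\pi}_{T})^{-1}$ on the transient part of $\mathcal{M}^{\pi}$) and diverges to $+\infty$ along sequences whose limit makes $w$ recurrent with positive visit probability from $s_{0}$. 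The factor $\iota^{\pi}_{w}$ is continuous wherever $\mathcal{P}^{\pi}_{w}$ is non-deterministic, diverges at deterministic distributions, and is uniformly bounded below by $|Succ(w)|/(|Succ(w)|-1) > 1$ via the Cauchy--Schwarz estimate $\sum_{q \in Succ(w)} p_{q}(1 - p_{q}) \leq 1 - 1/|Succ(w)|$. Both factors are non-negative and l.s.c., so their product $x^{\pi}_{w} \iota^{\pi}_{w}$ (with the convention $0 \cdot \infty := 0$) is l.s.c., and finite sums of l.s.c. functions remain l.s.c. Applying Weierstrass to the compact set $\Pi_{ad}$ and l.s.c. function $J$ yields $\pi^{*} \in \Pi_{ad}$ with $J(\pi^{*}) = \inf_{\pi \in \Pi_{ad}} J(\pi)$. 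If $J(\pi^{*}) < +\infty$, then $\pi^{*}$ keeps every state in $S \setminus C_{end}$ transient and the induced residence times $x^{\pi^{*}}_{s,a}$ form a feasible solution of \eqref{pr:mininfleakagemodified} attaining its infimum; otherwise the optimal value of \eqref{pr:mininfleakagemodified} is $+\infty$ and any admissible policy trivially attains it.

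The main obstacle is verifying the lower semicontinuity in the recurrence case: showing that if $\pi^{(n)} \to \pi$ with the limit $\pi$ making an observed state $w$ recurrent from $s_{0}$, then $x^{\pi^{(n)}}_{w} \to +\infty$. I would use the decomposition $x^{\pi}_{w} = \Pr^{\pi}(Reach[w]) / (1 - r^{\pi}_{w})$, where $r^{\pi}_{w}$ is the probability of returning to $w$ from $w$, valid for policies transient at $w$, and argue that both hitting probabilities depend continuously on $\pi$ on the finite MDP. Hence $r^{\pi^{(n)}}_{w} \to 1$ while $\Pr^{\pi^{(n)}}(Reach[w])$ stays bounded away from $0$, forcing $x^{\pi^{(n)}}_{w} \to +\infty$. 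Combined with the uniform positive lower bound on $\iota^{\pi^{(n)}}_{w}$, this yields $J(\pi^{(n)}) \to +\infty$ and completes the l.s.c. verification.
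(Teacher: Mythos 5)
Your overall strategy (lower semicontinuity of the objective plus compactness plus Weierstrass) is the same as the paper's, but you carry it out in the policy simplex $\prod_{s}\Delta(\mathcal{A}(s))$ rather than in the space of expected state-action residence times, and this is where a genuine gap appears. Your closedness argument for $\Pi_{ad}$ rests on the claim that $\pi\mapsto\Pr^{\pi}(Reach[C_{reach}])$ is continuous because it ``is the unique solution of a linear system on the non-absorbing states.'' This is false for finite MDPs: the reachability probability is only the \emph{least} fixed point of that system, the relevant matrix degenerates as end components form in the limit, and the map is merely lower semicontinuous. The paper itself exhibits the discontinuity in Appendix \ref{appendix:stationarymec} (Figure \ref{fig:oneec}: $\Pr^{\pi}(Reach[s_4\cup s_5])$ jumps from $1$ to $0$ as $\pi_{s_1,\beta}\to 0$), and Assumption \ref{assumption:unobservedmec} does not rescue you: in the MDP of Figure \ref{fig:endcomponents}, all UMECs are closed, yet policies with $\pi^{(n)}_{s_1,\beta}=1-1/n$ reach $s_2$ with probability $1$ while the limit policy reaches it with probability $0$. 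So $\Pi_{ad}$ is in general \emph{not} closed, and Weierstrass cannot be applied to it directly.

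The argument can be repaired, but the repair is the real content and you have not supplied it: one must intersect $\Pi_{ad}$ with a sublevel set $\{J\leq M\}$ of the l.s.c.\ objective ($M$ the finite optimal value) and show that \emph{this} intersection is closed, i.e., that reachability can only drop discontinuously along sequences whose objective blows up. Concretely, a drop in $\Pr^{\pi}(Reach[C_{reach}])$ at the limit requires the limit policy to be trapped in a newly formed recurrent end component disjoint from $C_{reach}$; if that component contains an observed state, lower semicontinuity of $J$ forces $J(\pi)=\infty>M$, and if it is a UMEC, closedness under Assumption \ref{assumption:unobservedmec} together with lower semicontinuity of the entry probability shows the approximating policies were already trapped, so their reachability was not above $\nu_{reach}$ after all. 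The paper sidesteps all of this by optimizing over the residence-time variables, where the feasible region of \eqref{pr:mininfleakagemodified} is cut out by the linear (hence closed) constraints \eqref{cons:restimedefs}--\eqref{cons:reach} and only boundedness needs the sublevel-set argument. Your remaining ingredients (the lower bound $\iota^{\pi}_{w}\geq |Succ(w)|/(|Succ(w)|-1)$, the l.s.c.\ of $x^{\pi}_{w}$ as a supremum of the continuous partial sums $\sum_{t\leq T}\Pr(s_t=w)$, and the product convention) are sound, but as written the proof does not go through.
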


	\begin{proposition} \label{proposition:cvx}
		The optimization problem given in \eqref{pr:mininfleakagemodified} is a convex optimization problem.
	\end{proposition}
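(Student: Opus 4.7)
The plan is to verify convexity in two parts: (a) the feasible set is convex, and (b) the objective \eqref{eq:mininfrestime} is a convex function of the decision variables $\{x^{\pi}_{s,a}\}$. Part (a) is immediate since every constraint in \eqref{cons:restimedefs}--\eqref{cons:reach} is affine in these variables, making the feasible set a polyhedron. All the work therefore sits in part (b).

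For the objective, since it is a sum over observed states $w \in W$, it suffices to prove that each summand $x^{\pi}_w \iota^{\pi}_w$ is convex in $\{x^{\pi}_{w,a}\}_{a \in \mathcal{A}(w)}$. The central idea is to recognize this summand as a perspective transformation. First I would introduce the linear auxiliary quantities $y_{w,q} := \sum_{a \in \mathcal{A}(w)} x^{\pi}_{w,a} \mathcal{P}_{w,a,q}$ and $x^{\pi}_w = \sum_{a \in \mathcal{A}(w)} x^{\pi}_{w,a}$, so that $\mathcal{P}^{\pi}_{w,q} = y_{w,q}/x^{\pi}_w$ and $\sum_q y_{w,q} = x^{\pi}_w$. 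Using the simplex identity $\sum_q p_q(1-p_q) = 1 - \sum_q p_q^2$, the summand rewrites as
\[ x^{\pi}_w \iota^{\pi}_w \;=\; x^{\pi}_w \, g\!\left(\frac{y_w}{x^{\pi}_w}\right), \qquad g(u) := \frac{1}{1 - \sum_q u_q^2}, \]
which is precisely the perspective of $g$ at $(x^{\pi}_w, y_w)$.

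Next I would establish convexity of $g$: the map $u \mapsto 1 - \sum_q u_q^2$ is concave and strictly positive exactly where no single component carries all the mass, so its reciprocal is convex on this open set. Since the perspective operation preserves convexity, and precomposition with the linear map $\{x^{\pi}_{w,a}\}_a \mapsto (x^{\pi}_w, y_w)$ preserves it again, each summand is convex in the decision variables. Summing over $w \in W$ and combining with part (a) then yields the claim.

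The one subtlety, and the main obstacle in my view, is reconciling this analytic argument with the extension of $x^{\pi}_w \iota^{\pi}_w$ to the boundary cases defined just before the proposition (value $0$ when $x^{\pi}_w = 0$, and value $+\infty$ when the induced transition is deterministic or $x^{\pi}_w = \infty$). I would argue that these declared values coincide with the natural lower-semicontinuous extension of the perspective of $g$ to the closure of its effective domain, so that the extended-real-valued objective remains a proper convex function and the convex-optimization designation is justified.
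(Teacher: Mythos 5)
Your argument is correct and follows essentially the same route as the paper: both prove convexity of the reciprocal of the positive concave map $p \mapsto \sum_q p_q(1-p_q) = 1-\sum_q p_q^2$ on the simplex minus its vertices, pass to the perspective of that function to obtain $x^{\pi}_w \iota^{\pi}_w$, precompose with the affine map from the state-action residence times, and observe that the constraints are affine. The only (minor) divergence is at the boundary, where the paper verifies convexity of the extended function by an explicit case check over the sets where $x^{\pi}_w = 0$ or the induced transition is deterministic, while you invoke the lower-semicontinuous closure of the perspective; both yield the same proper convex extension.
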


	\begin{remark}
		After a preprocessing step that has polynomial-time complexity in the size of $\mathcal{M}$, the optimization problem can be formulated as a conic optimization problem which can be solved using interior-point methods \textnormal{\normalfont \cite{nesterov1994interior}} in polynomial-time in the size of $\mathcal{M}$.
	\end{remark}

	After computing the optimal expected state-action residence times by the optimization problem in \eqref{pr:mininfleakagemodified}, a stationary, minimum-information admissible policy can be synthesized using the relationship $x^{\pi}_{s,a} = \pi_{s,a} \ x^{\pi}_{s}$.

	\section{Bounds on the Estimation Error} \label{section:errorbound}
	In this section, we consider estimators for the transition probabilities at the observed states and derive the bounds on the expected estimation error in terms of MSE. Define $\sigma_w$ as the MSE of an unbiased estimator at a state $w$. We assume that, for the estimator at state $w$, the observed data are the whole path of the agent and the transition probabilities for the set $S\setminus \lbrace{ w \rbrace}$ of states are known.
	
	\begin{proposition} \label{proposition:infoisalowerbound}
	For an MDP $\mathcal{M}$ and a policy $\pi \in \Pi^{St}(\mathcal{M})$,   
	$$\sigma_w \geq \dfrac{\Pr^{\pi}(Reach[w])^2}{ x^{\pi}_{w} \iota^{\pi}_{w}}$$ for every state $w \in W$.
\end{proposition}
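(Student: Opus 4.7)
The plan is to decompose $\sigma_w$ according to whether the random path $\xi$ visits $w$, and then lower-bound the conditional MSE using the Cram\'er-Rao inequality applied to the Fisher information of the conditional distribution of the path.

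Let $A := \{\xi \text{ visits } w\}$, so $\Pr^\pi(A) = \Pr^\pi(Reach[w])$, and let $N_w$ denote the number of times $\xi$ visits $w$. Since $A^c = \{N_w = 0\}$, one has $\mathbb{E}[N_w \mid A] = x^\pi_w / \Pr^\pi(A)$. Discarding the nonnegative contribution from $A^c$ gives $\sigma_w \geq \Pr^\pi(A) \cdot \mathbb{E}[\|\hat{\mathcal{P}}_w - \mathcal{P}^\pi_w\|^2 \mid A]$. I would then compute, for each $q \in Succ(w)$, the Fisher information of the conditional model at the scalar parameter $\theta_q := \mathcal{P}^\pi_{w,q}$. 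Because transitions out of states $s \neq w$ have probabilities that do not involve $\theta_q$ and because $\Pr^\pi(A)$ itself does not depend on $\theta_q$ (the first-hitting probability of $w$ is unaffected by outgoing transitions at $w$), the score for $\theta_q$ reduces to $\sum_{k=1}^{N_w} (\mathds{1}\{Y_k = q\} - \theta_q)/[\theta_q(1-\theta_q)]$, where $Y_1, \ldots, Y_{N_w}$ are the successors observed at visits to $w$. The strong Markov property makes each $Y_k$ an independent $\mathcal{P}^\pi_{w,\cdot}$-draw whenever it exists and makes $\{N_w \geq k\}$ independent of $Y_k$, so Wald's second identity under $\mathbb{P}(\cdot \mid A)$ yields $I(\theta_q \mid A) = \mathbb{E}[N_w \mid A] \cdot I_Q(\theta_q) = x^\pi_w / [\Pr^\pi(A)\,\theta_q(1-\theta_q)]$. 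Applying the scalar Cram\'er-Rao inequality coordinate-by-coordinate and summing (consistent with the per-coordinate Bernoulli viewpoint behind $\iota^\pi_w$) then gives $\mathbb{E}[\|\hat{\mathcal{P}}_w - \mathcal{P}^\pi_w\|^2 \mid A] \geq \sum_{q \in Succ(w)} 1/I(\theta_q \mid A) = \Pr^\pi(A) \sum_q \theta_q(1-\theta_q)/x^\pi_w = \Pr^\pi(A)/(x^\pi_w \iota^\pi_w)$, and substituting into the first decomposition yields the advertised bound.

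The main obstacle will be making the coordinate-wise Cram\'er-Rao step rigorous, because the joint Fisher information matrix of a categorical random variable is singular under the simplex constraint $\sum_q \theta_q = 1$. I would handle this by applying scalar Cram\'er-Rao to each $\theta_q$ separately rather than inverting a singular matrix, which avoids the degeneracy and matches the paper's definition of $\iota^\pi_w$ as a sum of reciprocals of Bernoulli Fisher informations. A secondary technical concern is that an unconditionally unbiased $\hat{\mathcal{P}}_w$ need not remain unbiased after conditioning on $A$; this is benign because conditioning on $A$ does not alter the score (as $\Pr^\pi(A)$ is $\theta$-independent), so $I(\theta_q \mid A)$ is the correct conditional Fisher information, and any residual conditional bias only contributes nonnegative terms to the conditional MSE and hence cannot tighten the lower bound derived under conditional unbiasedness.
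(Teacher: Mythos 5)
Your proposal is correct and follows essentially the same route as the paper: condition on whether the path reaches $w$, discard the no-visit contribution, show that the conditional Fisher information for each coordinate equals $\mathbb{E}[N_{w,\xi}\mid N_{w,\xi}>0]\cdot I_Q(\mathcal{P}^{\pi}_{w,q}) = x^{\pi}_{w}/\bigl(\Pr^{\pi}(Reach[w])\,\mathcal{P}^{\pi}_{w,q}(1-\mathcal{P}^{\pi}_{w,q})\bigr)$, and then apply the scalar Cram\'er--Rao bound coordinate-by-coordinate and sum over $Succ(w)$. The only difference is cosmetic: where you invoke the score decomposition and Wald's second identity over the random number of visits, the paper obtains the same conditional Fisher information via the chain rule for Fisher information over time steps together with the Markov property.
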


\begin{corollary} \label{corollary:totalmsebound} 
	For an MDP $\mathcal{M}$ and a policy $\pi \in \Pi^{St}(\mathcal{M})$,  the total MSE $\sum_{w \in W} \sigma_w$ satisfies $$\sum_{w \in W} \sigma_w \geq \dfrac{\underset{w \in W}{\min} \Pr^{\pi}(Reach[w])^2 |W|^2}{\mathbb{E} [ \iota^{\pi}_{W,\xi} ]}.$$ Consequently, if $\Pr^{\pi}(Reach[w]) = 1$ for every $\pi \in \Pi^{St}(\mathcal{M})$ and for all $w \in W$, then $\dfrac{|W|^2}{\mathbb{E}_{\xi} [ \iota^{\pi}_{W,\xi} ]}$ is a lower bound on the total MSE.
\end{corollary}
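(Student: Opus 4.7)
The plan is to derive the corollary by combining Proposition \ref{proposition:infoisalowerbound} with Proposition \ref{proposition:replace} and a single inequality (arithmetic-mean--harmonic-mean) applied to the positive quantities $x^{\pi}_w \iota^{\pi}_w$. The corollary is a summed, uniformly-weakened version of the per-state bound; the only real content beyond Proposition \ref{proposition:infoisalowerbound} is a Cauchy--Schwarz-type step that produces the factor $|W|^2$ and matches the expected total information in the denominator.

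First, I apply Proposition \ref{proposition:infoisalowerbound} to each $w \in W$ and add the resulting inequalities, obtaining
\begin{equation*}
\sum_{w \in W} \sigma_w \;\geq\; \sum_{w \in W} \frac{\Pr^{\pi}(Reach[w])^2}{x^{\pi}_w \iota^{\pi}_w}.
\end{equation*}
Next, I factor the minimum of the squared reach probabilities out of the numerators, which yields
\begin{equation*}
\sum_{w \in W} \sigma_w \;\geq\; \Bigl(\min_{w \in W} \Pr^{\pi}(Reach[w])^2\Bigr) \sum_{w \in W} \frac{1}{x^{\pi}_w \iota^{\pi}_w}.
\end{equation*}

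The central step is then the AM--HM inequality applied to the positive reals $\{x^{\pi}_w \iota^{\pi}_w : w \in W\}$, which gives $\sum_{w \in W} 1/(x^{\pi}_w \iota^{\pi}_w) \geq |W|^2 / \sum_{w \in W} x^{\pi}_w \iota^{\pi}_w$. Using Proposition \ref{proposition:replace} to rewrite the denominator as $\mathbb{E}[\iota^{\pi}_{W,\xi}]$ delivers exactly the stated bound. The consequence in the second sentence follows immediately: when $\Pr^{\pi}(Reach[w]) = 1$ for every $w \in W$, the minimum equals $1$.

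The only subtlety, and the mild obstacle worth flagging, is ensuring the AM--HM step is valid, that is, that each $x^{\pi}_w \iota^{\pi}_w$ is strictly positive. If some $x^{\pi}_w = 0$, then $\Pr^{\pi}(Reach[w]) = 0$, so $\min_{w \in W} \Pr^{\pi}(Reach[w])^2 = 0$ and the bound is trivial; if $\mathbb{E}[\iota^{\pi}_{W,\xi}] = \infty$ (which is consistent with the conventions fixed after Proposition \ref{proposition:replace}), the right-hand side is again zero and the inequality holds vacuously. Thus it suffices to carry out the AM--HM argument under the assumption that every $x^{\pi}_w \iota^{\pi}_w$ is a finite positive number, which is the only nontrivial regime.
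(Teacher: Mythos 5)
Your proposal is correct and follows essentially the same route as the paper: sum the per-state bound from Proposition \ref{proposition:infoisalowerbound}, pull out the minimum squared reachability probability, apply the AM--HM (Cauchy--Schwarz) inequality to obtain the $|W|^2$ factor, and invoke Proposition \ref{proposition:replace} to identify the denominator with $\mathbb{E}[\iota^{\pi}_{W,\xi}]$. Your explicit treatment of the degenerate cases ($x^{\pi}_w \iota^{\pi}_w$ zero or infinite) is a small addition the paper leaves implicit, but the argument is the same.
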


	\begin{figure} [h]
		\centering
		\begin{subfigure}{0.4\paperwidth}
			\centering
			\begin{tikzpicture}
			\node[state, initial]  (q0) {$s_0^o$};
			\node[state] [right=of q0] (q1) {$s_1^o$};
			\node[state] [right=of q1] (t) {$s_2$};
			\draw 
			(q0) edge[loop above] node[above, sloped] {$\beta, 0.9$} (q0)
			(q0) edge[bend left=50] node[above, sloped] {$\beta, 0.1$} (q1)
			(q0) edge node[above, sloped] {$\alpha,1$} (q1)
			(q1) edge[bend left=50] node[below, sloped] {$\beta,1$} (q0)
			(q1) edge node[above, sloped] {$\alpha,1$} (t)
			(t) edge[loop above] node[above, sloped] {$\alpha,1$} (t);
			\end{tikzpicture}
			\caption{}
			\label{fig:msevsfishermdp}
		\end{subfigure}
		~ 
		\begin{subfigure}{0.4\paperwidth}
			\centering
			\includegraphics[width=\textwidth]{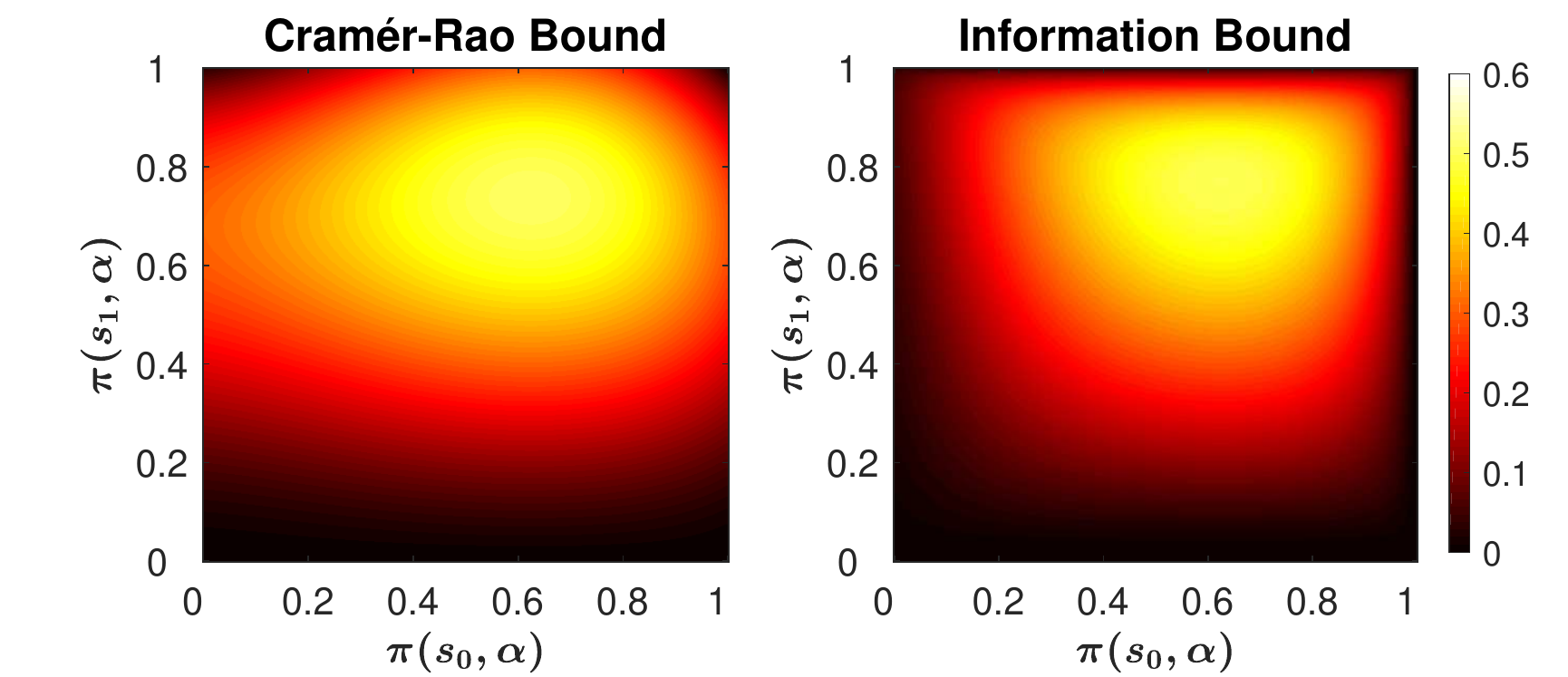}
			\caption{}
			\label{fig:msevsfishergraph}
		\end{subfigure}
		~ 
		\caption{(a) An MDP with 3 states. A label $a,p$ of a transition refers to the transition that happens with probability $p$ when action $a$ is taken. The states marked with the superscript $o$ are observed. (b) The Cram\'er-Rao bound on the total MSE of the estimators and the error bound given in Corollary \ref{corollary:totalmsebound}. }\label{fig:msevsfisher} 
	\end{figure}
	
	An example of the bound given in Corollary \ref{corollary:totalmsebound} is illustrated in Figure \ref{fig:msevsfisher}. Both of the observed states are visited under any stationary policy and the reciprocal of the expected total information is directly a lower bound on the total MSE of the estimators. One who wants to maximize the total MSE of the estimators may prefer to optimize over the expected total information instead of the Cram\'er-Rao bound since the Cram\'er-Rao is not a convex or concave function of the expected residence time parameters while the minimum-information admissible policy can be computed via a convex optimization problem. 
	\section{Numerical Examples} \label{section:examples}
	In this section, we illustrate the proposed method through two numerical examples. We solved the optimization problems using CVX toolbox \cite{cvx} with MOSEK \cite{andersen2000mosek} on a computer with an Intel Core i7-8550u 1.8 GHz CPU and 8 GB of RAM.  
	
	\subsection{Partly Hidden Agent}
	In this example, we explain the characteristics of the minimum-information admissible policy through different scenarios. 
	
	The environment which is given in Figure \ref{fig:quadrantexample} consists of 4 regions that are separated with walls and connected to each other with bridges. Each region is a $20 \times 20$ grid world and each tile in these regions represents a state. Except for the reach state, the agent has $4$ actions, namely, up, down, left, and right, at every state. When the agent takes an action the transition happens into the target direction with probability $0.8$ and in the other directions uniformly randomly with probability $0.2$. If a direction is out of the grid the transition probability to that direction is proportionally distributed to the other directions.
	
	The initial state is the black top-left corner tile and the reach state is the green bottom-middle tile. The task of the agent is to reach the reach state with probability $1$. While the agent is in the gray tiles, the observer cannot observe the transitions of the agent.  
	
	\begin{figure} [h]
		\centering
		\begin{subfigure}[b]{0.27\paperwidth}
			\centering
			\includegraphics[width=\textwidth]{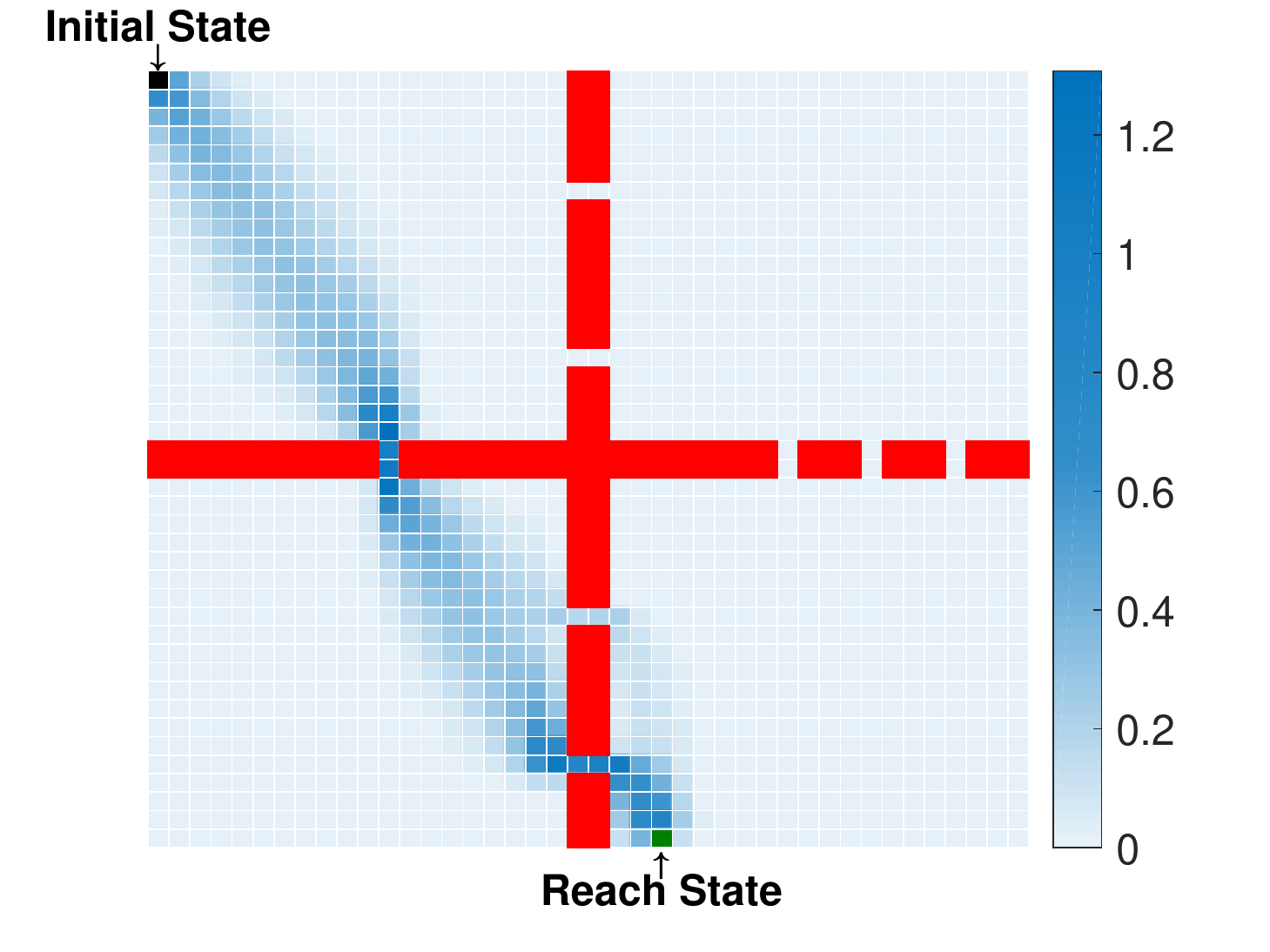}
			\caption{}
			\label{fig:quadrantmil1}
		\end{subfigure}
		\begin{subfigure}[b]{0.27\paperwidth}
			\centering
			\includegraphics[width=\textwidth]{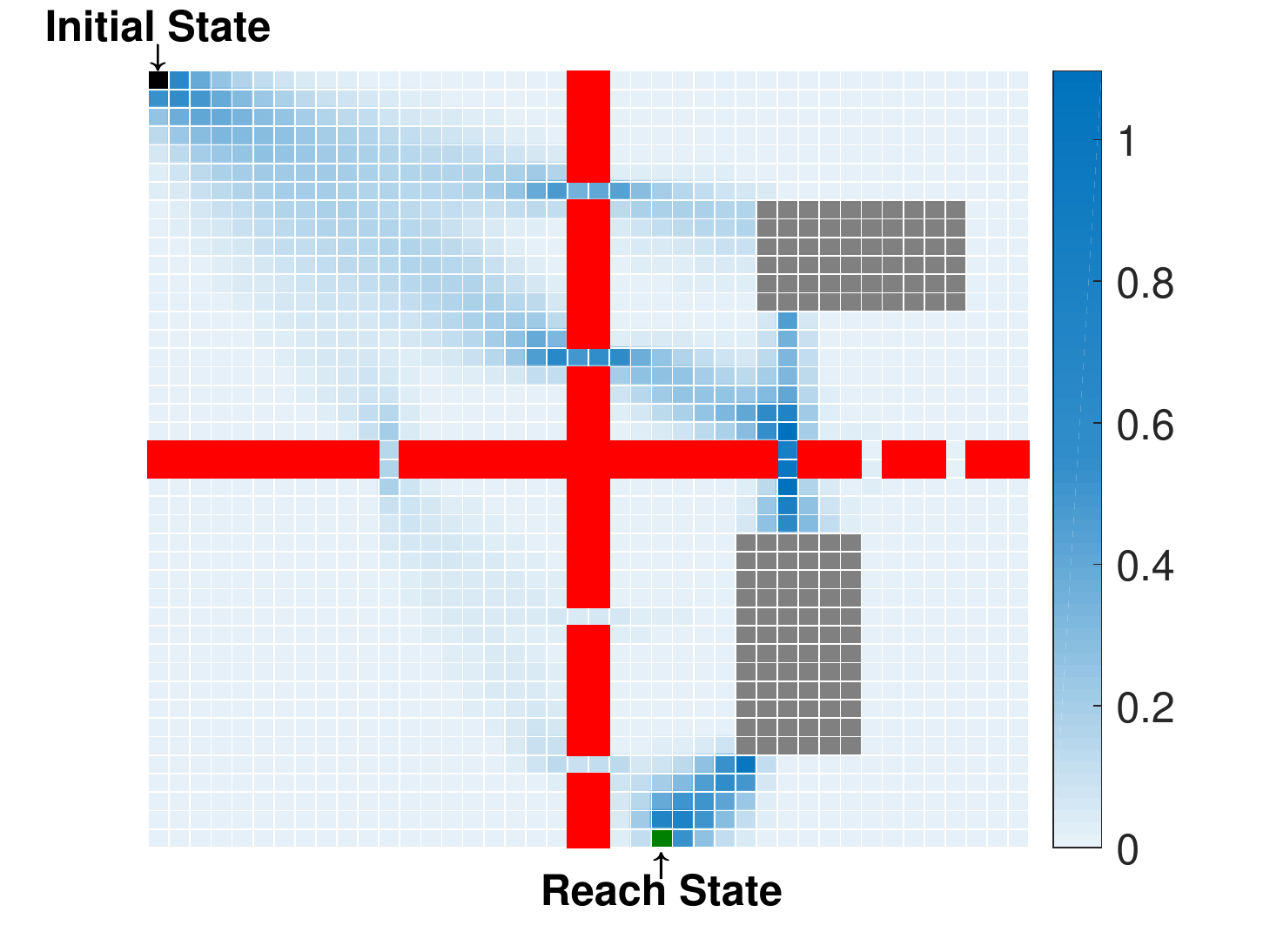}
			\caption{}
			\label{fig:quadrantmil2}
		\end{subfigure}
		\begin{subfigure}[b]{0.27\paperwidth}
			\centering
			\includegraphics[width=\textwidth]{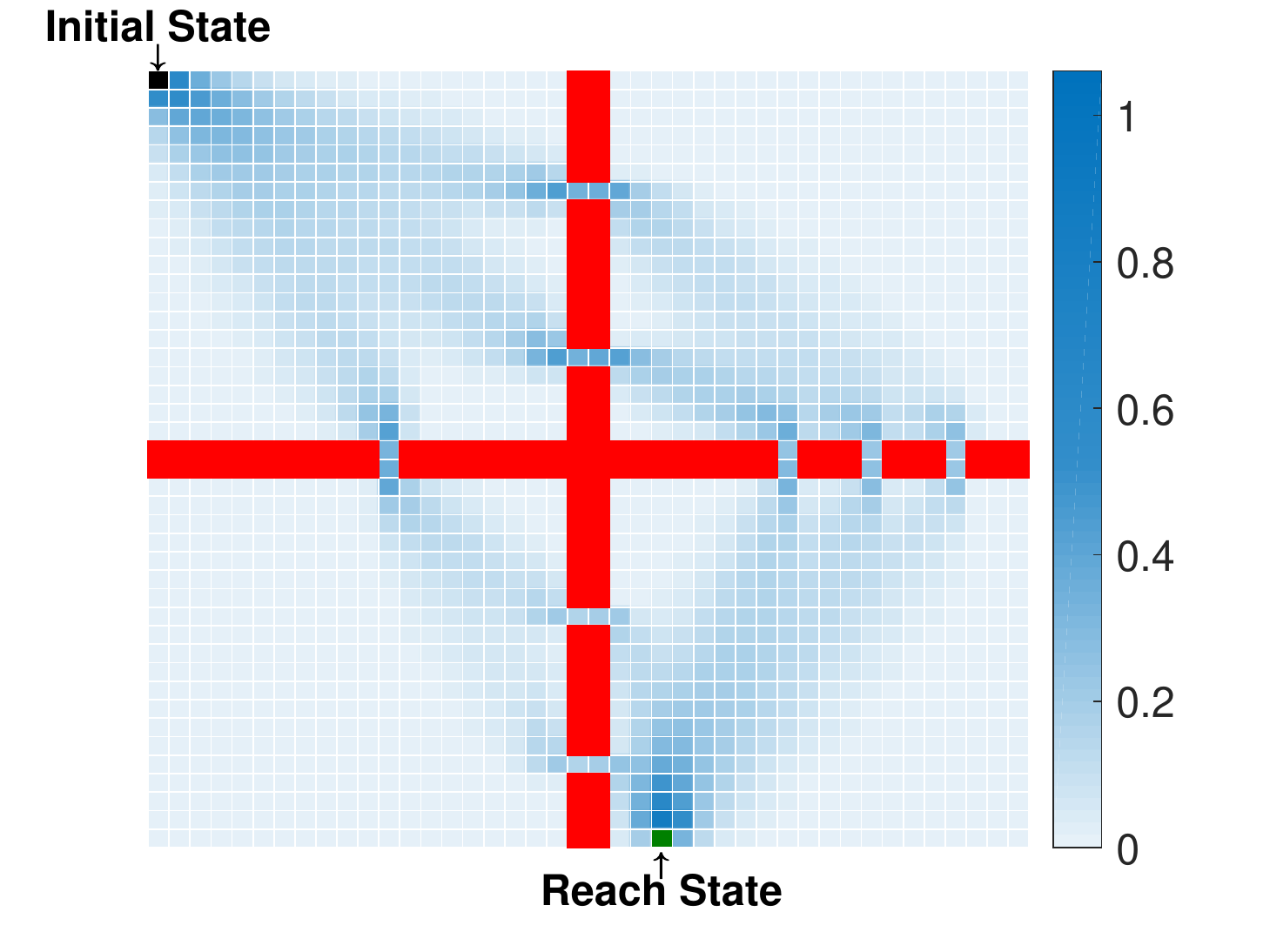}
			\caption{}
			\label{fig:quadrantmil3}
		\end{subfigure}
		\caption{The heatmaps of expected state residence times for partly hidden agent example. For the scenario given in Figure \ref{fig:quadrantmil2} the environment has some unobserved regions while every state is observed for the scenario given in Figure \ref{fig:quadrantmil2}. The scenario given in Figure \ref{fig:quadrantmil3} considers exit information of in addition to the transition information. } \label{fig:quadrantexample}
	\end{figure}

	In the first scenario (see Figure \ref{fig:quadrantmil1}) all states are observed except the reach state and the bridge states. The agent completes the task with a low number of observed transitions (See Table \ref{table:partlyhiddenagent}) with randomized transitions. Note that the randomization only happens between the states that are in the direction of the reach state since further randomization leads to more observations. When the unobserved regions are present in the environment (see Figure \ref{fig:quadrantmil2}), the policy generates paths that pass through the unobserved regions to reduce the number of observations. However, the unobserved regions are not always utilized. For example, in the top-right region if the agent is already away from the unobserved region, it directly goes to the bottom-right region. Although, no information leaks in the unobserved regions, the agent leaks information during the process of reaching those states. 
	
	We remark that the minimum-information admissible policy minimizes only the information of transitions from the observed states. While this approach reduces the amount of leaked information in the local sense, i.e., the transitions between the states, the global behavior, i.e., the transitions between the regions, might be easily inferred. We observe such a phenomenon for the scenario given in Figure \ref{fig:quadrantmil1}; the agent leaves the regions using the same bridge. This behavior may be risky if there is an adversary that is interested in the information of which bridge is used. To avoid this behavior, we add a weighted penalty, \textit{exit information}, for each region. The exit information of a region has the same form with the transition information and consists of the expected state residence times of the bridges. With the exit information (see Figure \ref{fig:quadrantmil3})  the agent randomizes its exit bridge from the regions compared to the initial case (see Figure \ref{fig:quadrantmil1}).

	\begin{table}[h] 
		\centering
		\caption{Numerical values for partly hidden agent example.}
		\label{table:partlyhiddenagent}
		\begin{tabular}{@{}cccc@{}}
			\toprule
			Scenario & \begin{tabular}[c]{@{}c@{}}Expected \\ Total Information\end{tabular} & \begin{tabular}[c]{@{}c@{}}Expected Number\\  of Observations\end{tabular} & Solving Time \\ \midrule
			Figure \ref{fig:quadrantmil1} &        152.20                                                               &                   81.87                                                         &        0.52      \\
			Figure \ref{fig:quadrantmil2} &           146.00                                                             &                                                    78.89                        &      0.38        \\
			Figure \ref{fig:quadrantmil3} &             179.64                                                          &                             98.43                                               &    0.58          \\ \bottomrule
		\end{tabular}
	\end{table}
	
	\subsection{Inference of Local Behavior}
	We explain the difference between the proposed method and the policy synthesis via entropy maximization through this example. The environment is a $11 \times 11$ grid world given in Figure \ref{fig:gridworldentropy} where each tile represents a state. The black tile is the initial state, the green tile is the reach state, and the red tiles are the absorbing states. Except for the absorbing states and the reach state the agent can transition to $4$ directions, namely, up, down, left, and right, at every state. When the agent takes an action, the transition happens in the target direction with probability $1$. If a direction is out of the grid the action is not allowed. The task of the agent is to reach the reach state with probability $1$.  
	
	We compare the policies in terms of their estimation error, which is calculated for different number of sample paths. The observer gets sample paths and estimates the transition probabilities at the observed states using a sample mean estimator. We measure the estimation error for a state by the mean squared error (MSE) between the observed and actual transition distributions at the observed states. The total error is the sum of MSE for each state. If there is no observation sample from a state, we set the MSE for that state.For the weighted MSE error, the weight of a state is ratio between the number of observations from the state and the total number of observations. 
	
	Maximizing the entropy of an MDP is equivalent to maximizing the entropy of the possible paths, and a high entropy value leads to unpredictable paths. Under the reachability constraint, the maximum entropy of the MDP given in Figure \ref{fig:gridworldentropy} is unbounded. For policy synthesis, we follow the procedure given in \cite{savas2018entropy} and impose an upper bound on the expected total state residence time $\Gamma$. As the bound increases, the maximum entropy value of the MDP increases. We synthesize three policies that maximizes the entropy of MDP with different values for $\Gamma = 15, 60,$ and $120$. 
	
	\begin{figure} [t]
		\centering
		\begin{subfigure}[b]{0.19\paperwidth}
			\centering
			\includegraphics[width=\textwidth]{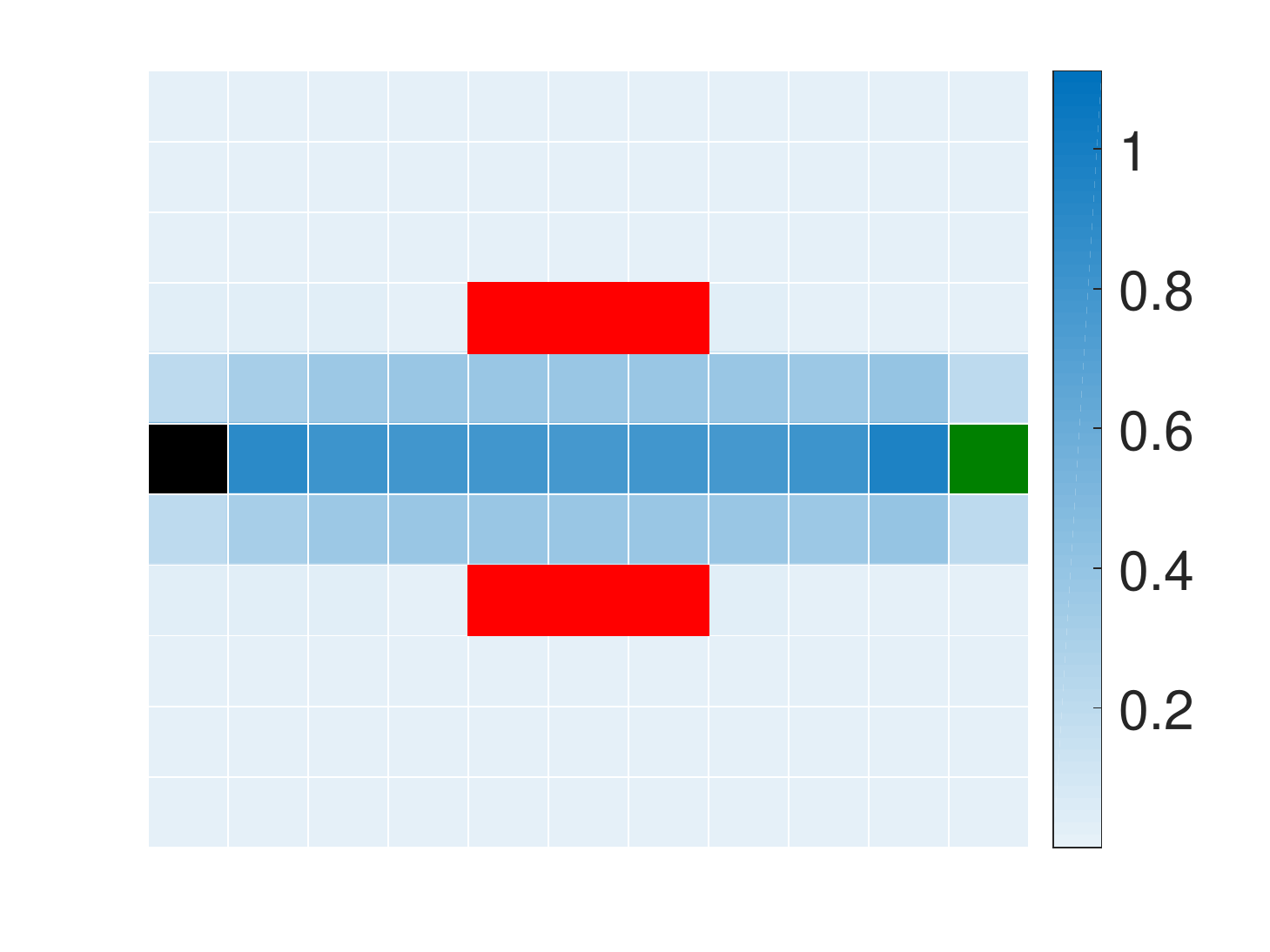}
			\caption{The minimum-information admissible policy}
			\label{fig:gridworld2mininfo}
		\end{subfigure}
		\begin{subfigure}[b]{0.19\paperwidth}
			\centering                 
			\includegraphics[width=\textwidth]{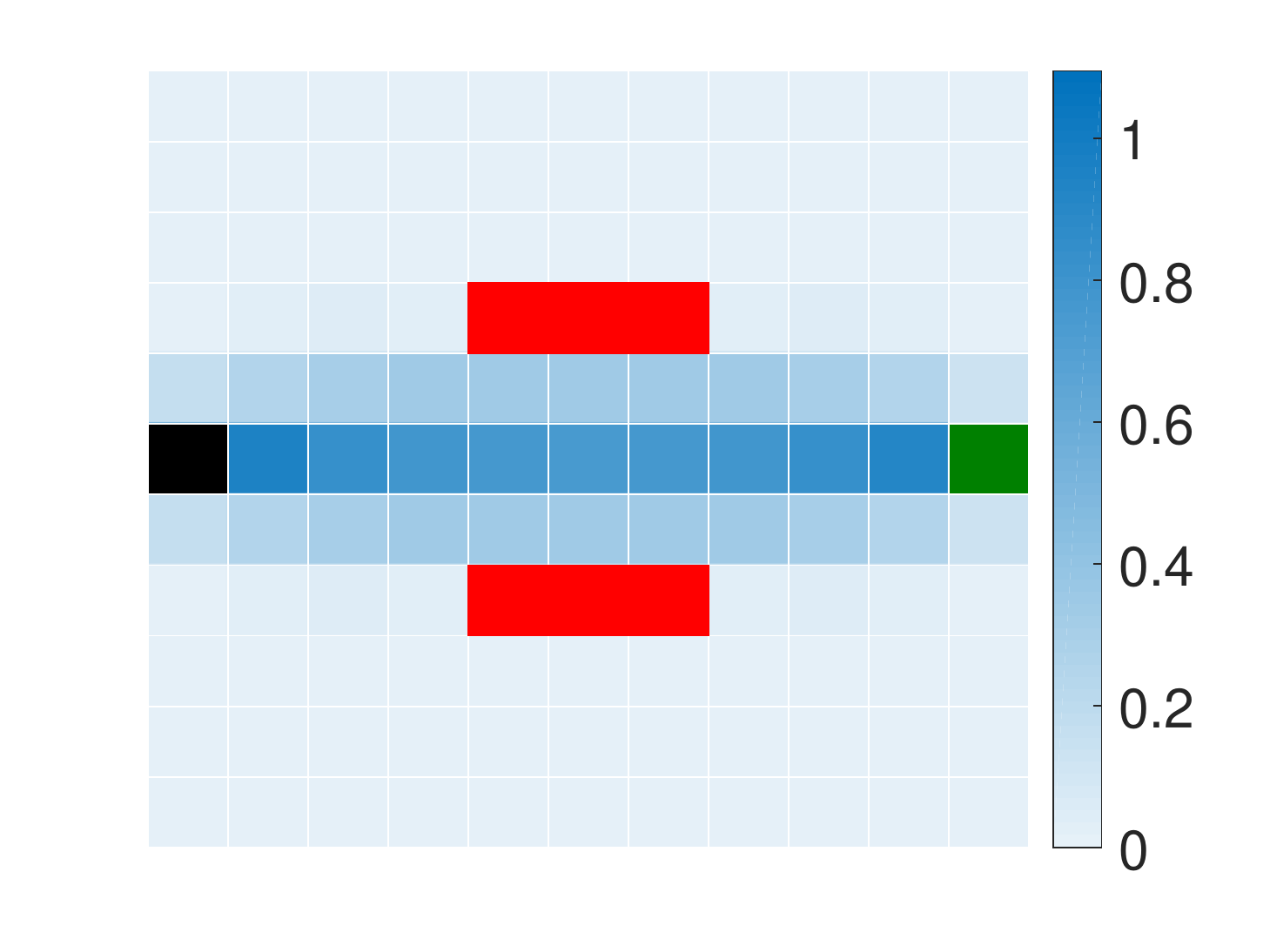}
			\caption{The maximum-entropy policy ($\Gamma = 15$)}
			\label{fig:gridworld2entropy15}
		\end{subfigure}
		\begin{subfigure}[b]{0.19\paperwidth}
			\centering
			\includegraphics[width=\textwidth]{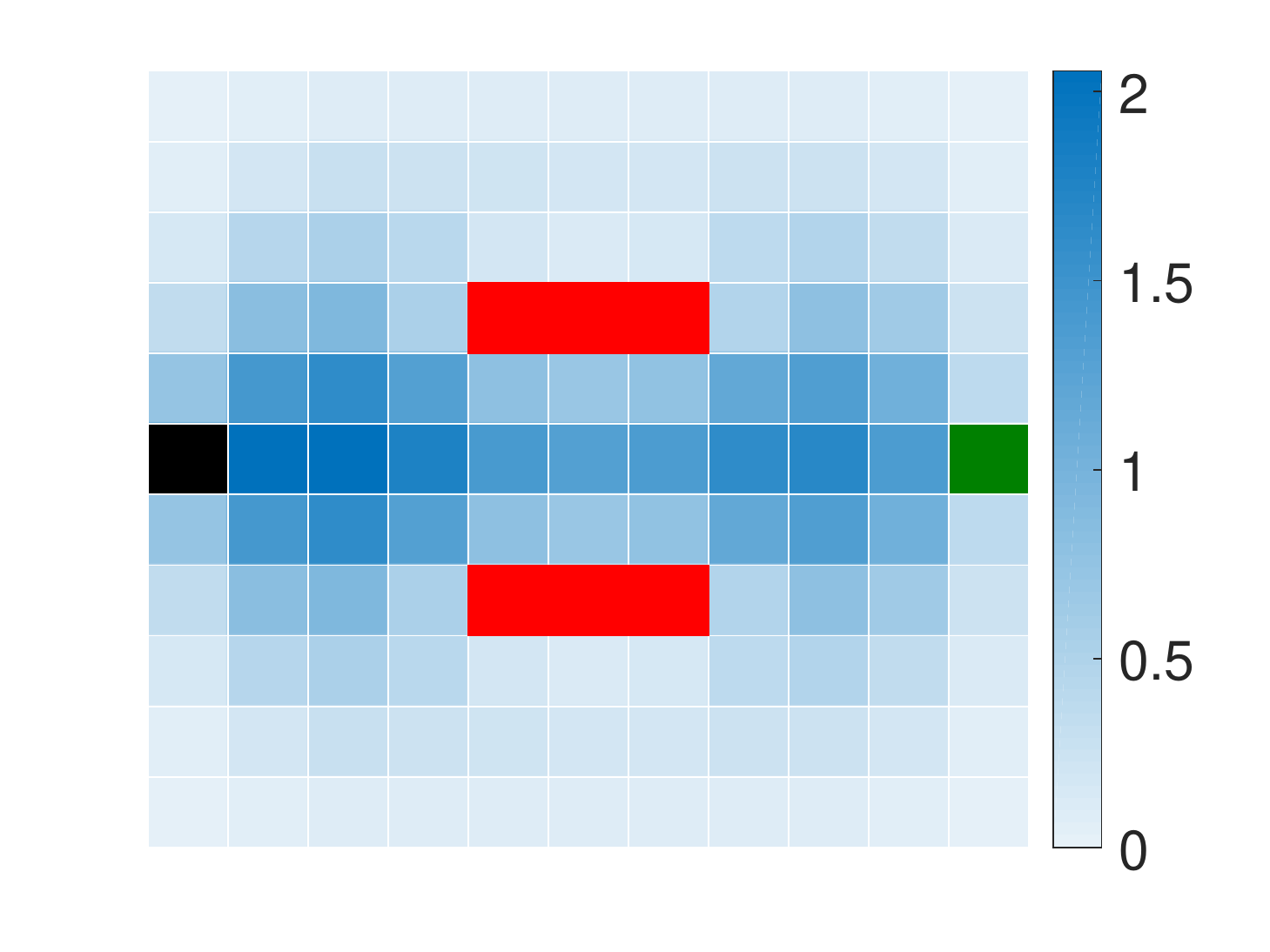}
			\caption{The maximum-entropy policy ($\Gamma = 60$)}
			\label{fig:gridworld2entropy60}
		\end{subfigure}
		\begin{subfigure}[b]{0.19\paperwidth}
			\centering
			\includegraphics[width=\textwidth]{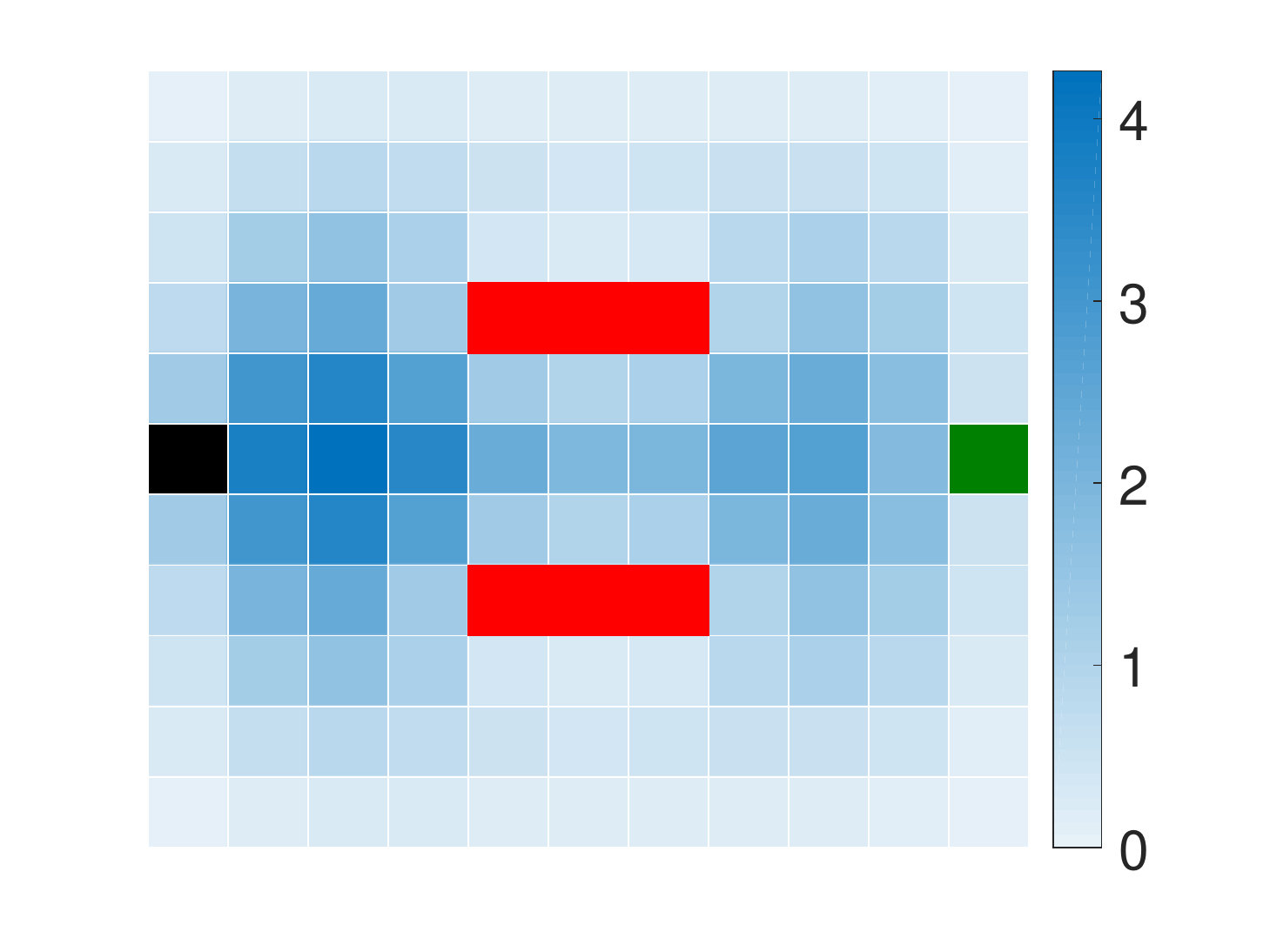}
			\caption{The maximum-entropy policy ($\Gamma = 120$)}
			\label{fig:gridworld2entropy120}
		\end{subfigure}
		\caption{The expected state residence times for inference of local behavior example.} \label{fig:gridworldentropy}
	\end{figure}

	For low values of $\Gamma$ such as $15$, the minimum-information admissible and the maximum-entropy policies show similar behavior. However, for the high values of $\Gamma$, the difference between the minimum-information admissible policy and the maximum-entropy policy becomes clear. The minimum-information admissible policy completes the task with a low number of non-informative observations. On the other hand, the maximum-entropy policy visits the observed states more to explore more paths and randomize the probabilities of paths. While the agent follows different paths, the expected residence times at the observed states increases and observer gets more samples. Although the policy is randomized and samples are less informative, transition probabilities are inferred due to the high number of observations. The result suggests that the unpredictability of the paths does not imply the limitation of inference for the transitions between states. Hence, the minimum-information admissible policy and the maximum-entropy policy serve different purposes.

		\begin{figure} [h]
		\centering
		\begin{subfigure}{0.4\paperwidth}
	\centering
	\includegraphics[width=\textwidth]{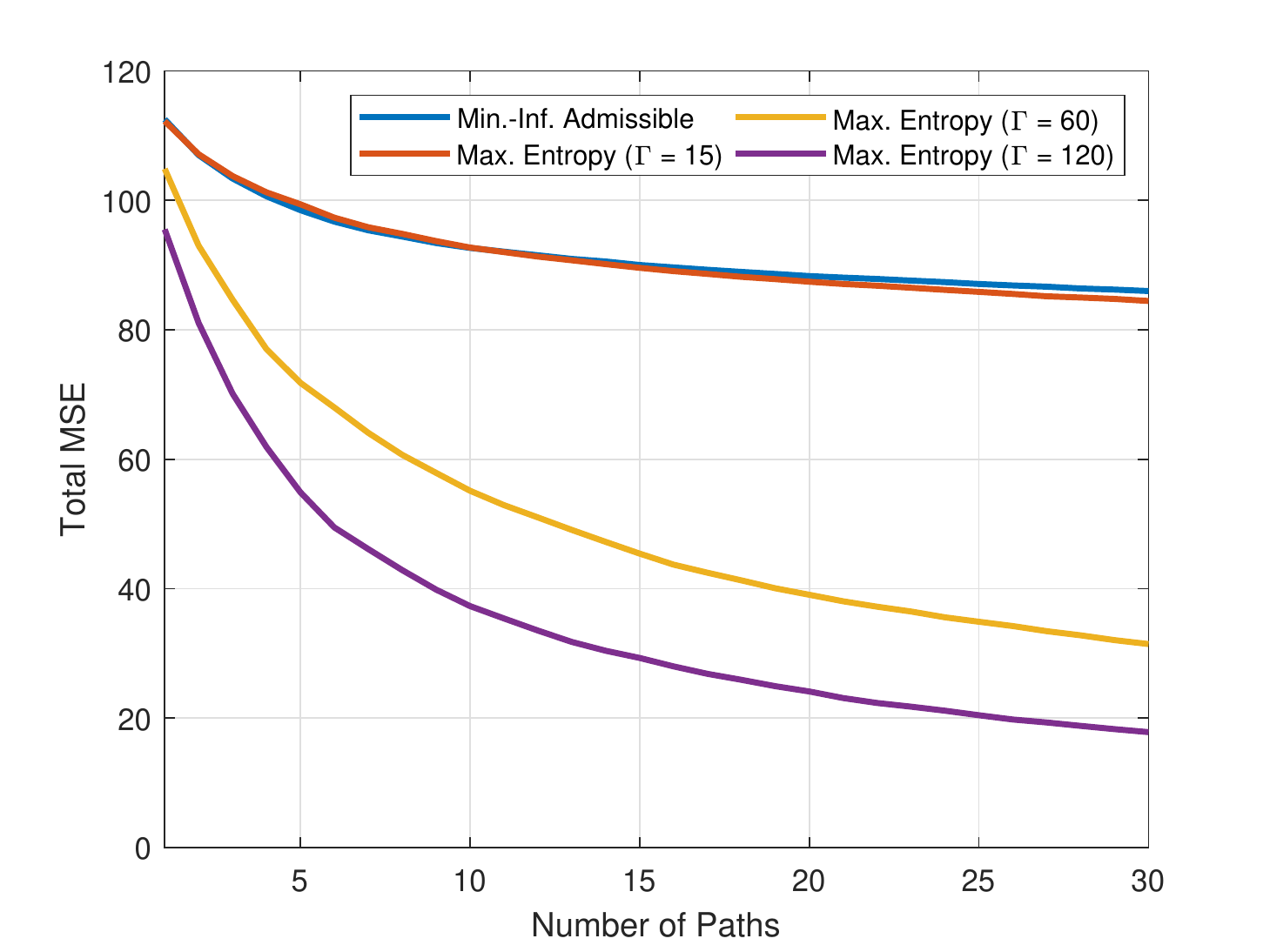}
	\caption{}
	\label{fig:msevsfishergraph1}
	\end{subfigure}
		~ 
		\begin{subfigure}{0.4\paperwidth}
			\centering
			\includegraphics[width=\textwidth]{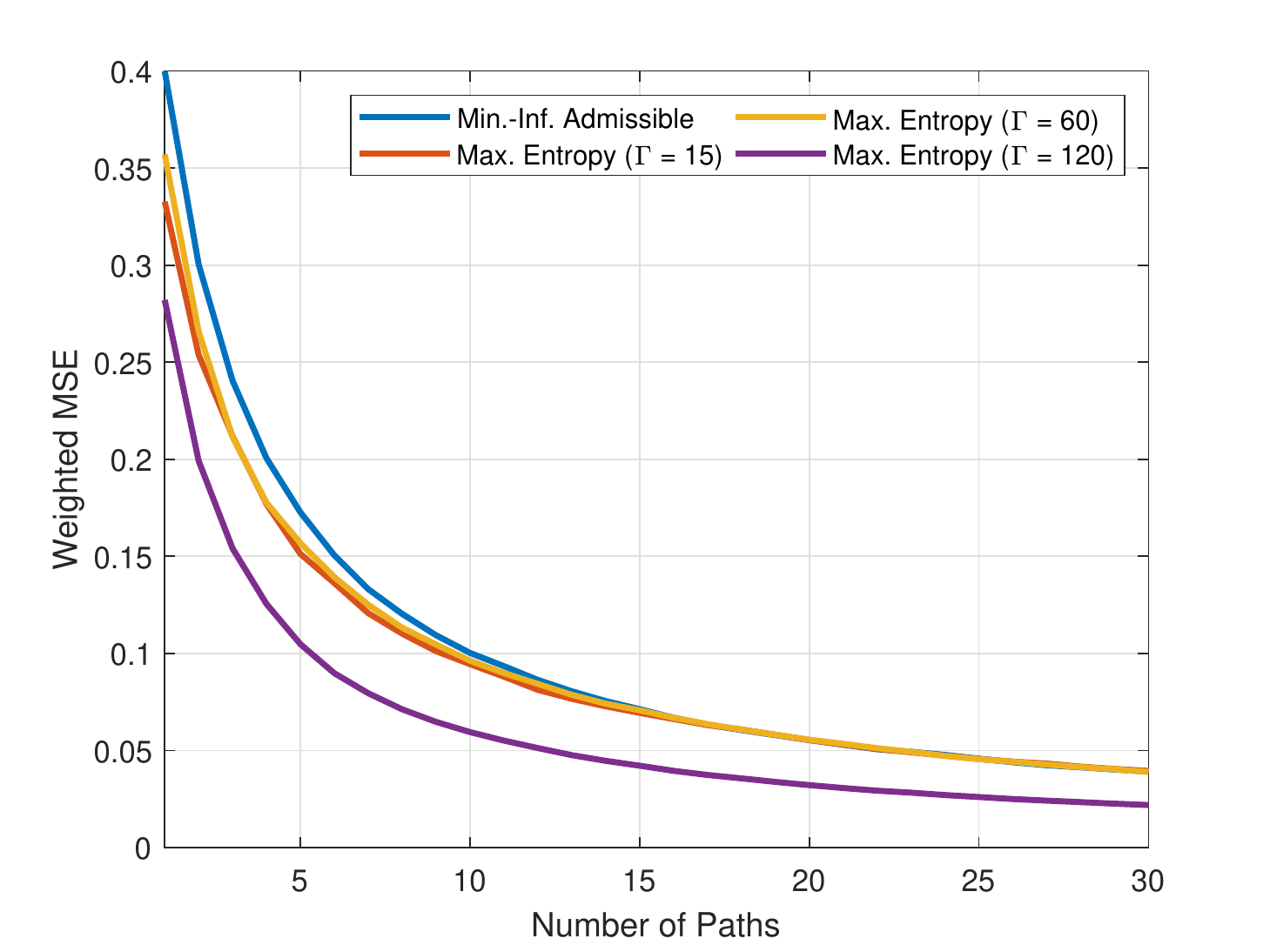}
			\caption{}
			\label{fig:msevsfishergraph2}
		\end{subfigure}
		~ 
		\caption{The expected estimation errors. The curves are averaged over 100 experiments.}
	\end{figure}

	\section{Conclusion} \label{section:conclusion}
	We focus on policy synthesis for an agent whose behavior is inferred by an outside adversarial observer. Such an agent must as less informative observations as possible to the observer while completing its task. Based on this criterion, we introduced transition information which is based on the Fisher information and measures the amount of information leaked to the observer from a transition. Then, we formulated a problem that minimizes the expected total information leaked to the observer and showed the existence of such a policy. The significant feature of the proposed method is that it balances a possible trade-off between the number of observations and the informativeness of each observation.  
	
	The proposed method relies on the assumption that the agent follows a stationary policy on the observed states. A history dependent planning method may deceive the observer by actively changing the policy. We aim to remove this assumption and design an algorithm that takes the past transitions into account. 
	
\section*{Acknowledgement}
	This work was supported in part by DARPA W911NF-16-1-0001.
	
	\bibliographystyle{unsrt}
	\bibliography{ref}		
	
	\appendices
	\section{Unobserved Maximal End Components} \label{appendix:unobservedmec}
	In Section \ref{section:methodology}, we said that after reaching an unobserved maximal end component (UMEC), the agent may leak no more information since there exists a stationary policy that always stays in the UMEC. However, such a policy may not be admissible due to the reachability constraint. In that case, the agent has to leave the UMEC. 
	
	Assumption \ref{assumption:unobservedmec} ensures that the agent cannot leave UMECs. Every policy stays in UMECs and hence the outflow from these states is zero. Thanks to this assumption, we only need to consider the policies where the agent stays in UMECs and synthesize the policy accordingly.
	
	In the following subsections we investigate the cases where the assumption does not hold. Appendix \ref{appendix:stationarymec} provides an exhaustive search algorithm to find the optimal stationary policy. Appendix \ref{appendix:nonstationarymec} provides an algorithm that searches a different class of policies to find the optimal policy.
	
	\subsection{Agents with Stationary Policies} 
\label{appendix:stationarymec}
Consider the MDPs given in Figure \ref{fig:UECs} where the reachability requirement is $\Pr^{\pi}(Reach[s_4 \cup s_5 ]) \geq 0.5$. For both MDPs information is leaked only at state $s_3$ and it is proportional to the expected residence time at state $s_3$, i.e., $x^{\pi}_{s_3}$. Note that also the reachability probability is equal to the expected residence time at state $s_3$, i.e.,  $x^{\pi}_{s_3} = \Pr^{\pi}(Reach[s_4 \cup s_5])$.

		\begin{figure}[h] 
	\centering
	\begin{subfigure} [h]{0.4\paperwidth} 
		\centering
		\begin{tikzpicture}[scale=0.25] 
		\node[state, initial]  (s0) {$s_0$};
		\node[state] [right=of s0] (s1) {$s_1$};
		\node[state] [above=of s1] (s2) {$s_2$};
		\node[state] [right=of s1] (s3) {$s_3^o$};
		\node[state] [above right=of s3] (s4) {$s_4$};
		\node[state] [below right=of s3] (s5) {$s_5$};
		\draw 
		(s0) edge node[above, sloped] {$\alpha,1$} (s1)
		(s1) edge[bend left] node[above, sloped] {$\alpha,1$} (s2)
		(s2) edge[bend left] node[above, sloped] {$\alpha,1$} (s1)
		(s1) edge node[above, sloped] {$\beta,1$} (s3)
		(s3) edge node[above, sloped] {$\alpha,0.5$} (s4)
		(s3) edge node[below, sloped] {$\alpha,0.5$} (s5)
		(s4) edge[loop right] node[above] {$\alpha,1$} (s4)
		(s5) edge[loop right] node[above] {$\alpha,1$} (s5);
		\end{tikzpicture}
		\caption{}
		\label{fig:oneec}
	\end{subfigure}
	\begin{subfigure} [h]{0.4\paperwidth}  
		\centering
		\begin{tikzpicture}[scale=0.25]
		\node[state, initial]  (s0) {$s_0$};
		\node[state] [right=of s0] (s1) {$s_1$};
		\node[state] [above=of s1] (s2) {$s_2$};
		\node[state] [right=of s1] (s3) {$s_3^o$};
		\node[state] [above right=of s3] (s4) {$s_4$};
		\node[state] [below right=of s3] (s5) {$s_5$};
		\draw 
		(s0) edge node[above, sloped] {$\alpha,1$} (s1)
		(s1) edge[bend left] node[above, sloped] {$\alpha,1$} (s2)
		(s2) edge[bend left] node[above, sloped] {$\alpha,1$} (s1)
		(s1) edge node[above, sloped] {$\beta,1$} (s3)
		(s2) edge[loop above] node[above, sloped] {$\beta,1$} (s2)
		(s3) edge node[above, sloped] {$\alpha,0.5$} (s4)
		(s3) edge node[below, sloped] {$\alpha,0.5$} (s5)
		(s4) edge[loop right] node[above] {$\alpha,1$} (s4)
		(s5) edge[loop right] node[above] {$\alpha,1$} (s5);
		\end{tikzpicture}
		\caption{}
		\label{fig:twoec}
	\end{subfigure}
	\caption{MDPs with 6 states. A label $a,p$ of a transition refers to the transition that happens with probability $p$ when action $a$ is taken. The states marked with the superscript $o$ are observed.}
	\label{fig:UECs}
\end{figure}
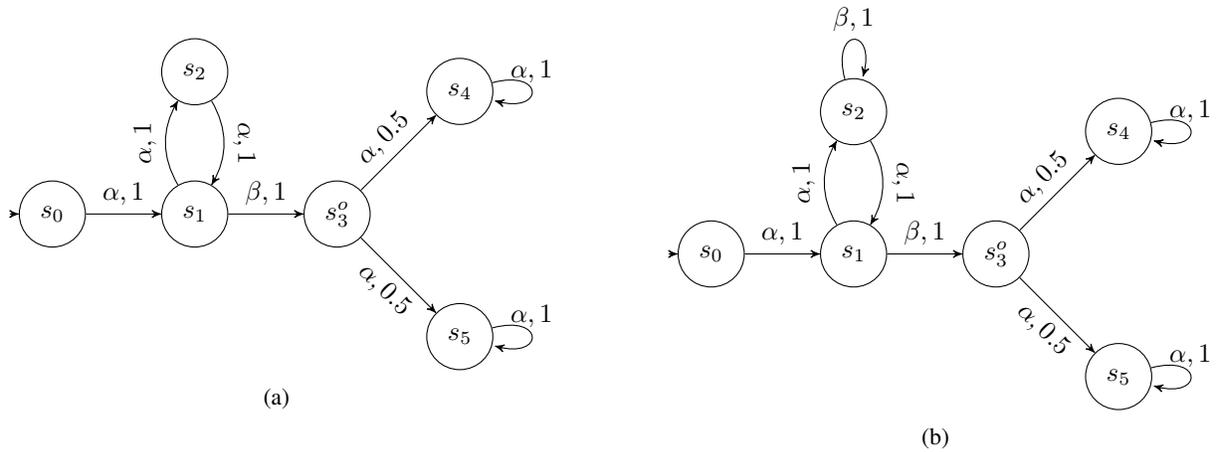

One might naturally think that a policy that makes $x^{\pi}_{s_3} = 0.5$ is a minimum-information admissible policy. However, we note that such a stationary policy might not exist since $x^{\pi}_{s_3}$ and $\Pr^{\pi}(Reach[s_4 \cup s_5])$ are not continuous functions of stationary policies. For the MDP given in Figure \ref{fig:oneec}, a stationary policy $\pi$ has $\Pr^{\pi}(Reach[s_4 \cup s_5]) = 1$ if $\pi_{s_1,\beta} > 0$ and $\Pr^{\pi}(Reach[s_4 \cup s_5]) = 0$ otherwise. Every policy $\pi^*$ such that $\pi^*(s_1,\beta) > 0$ is a stationary, minimum-information admissible policy. However, such a policy does not satisfy the reachability requirement with equality. For the MDP given in Figure \ref{fig:twoec}, it is possible to find a stationary policy that satisfies the reachability requirement with equality. The stationary policy $\pi^*$ with $\pi^*_{s_1,\alpha} = 0.5$, $\pi^*_{s_1,\beta} = 0.5$, $\pi^*_{s_2,\alpha} = 0$, and $\pi^*_{s_2,\beta} = 1$ is the minimum-information admissible policy.

	By the MDPs given in Figure \ref{fig:UECs}, we note that determining whether the optimal policy stays in a UEC is not trivial. To find the stationary, minimum-information admissible policy, we give an optimization algorithm that is based on exhaustive search of all unobserved end components.  
	
	\begin{definition}
		A \textit{union unobserved end component} is a sub-MDP $(C,D)$ that is union of UECs $(C_1, D_1), \ldots, (C_N, D_N)$ such that $C = C_1 \cup \ldots \cup C_N$ and $D(s) = D_1(s) \cup \ldots \cup D_N(s)$ for every $s$ in $C$.
	\end{definition}
	
	Algorithm \ref{alg:combinatorial} takes a subset of UMEC states, checks whether this subset is a union unobserved end component (see Lines \ref{algstep:takeSet}-\ref{algstep:checkSet}). If the subset is a union unobserved end component, it finds the optimal stationary policy that makes the agent stay in the union unobserved end component (see Lines \ref{algstep:setUEC} - \ref{algstep:synpol}). The algorithm outputs the minimum-information admissible policy after checking all subsets. 
	  
	\begin{algorithm} [t]
		\caption{Synthesis of a stationary, minimum-information admissible policy for MDPs with UMECs - Process 1 } \label{alg:combinatorial}
		\begin{algorithmic}[1]
			\State \textbf{Input:} An MDP $\mathcal{M} = (S, \mathcal{A}, \mathcal{P}, s_0)$, the set of observed states $W$, the set of states to be reached $C_{reach}$, and the reachability probability $\nu_{reach}$.
			\State \textbf{Output:} A stationary, minimum-information admissible policy $\pi^*$ for $\mathcal{M}$.
			\State $R:= \emptyset$.
			\State Find every UMEC $(C,D)$ and set $R:= R \cup C$.
			\State $L:= 2^R.$
			\State $minval := \infty$
			\ForAll {$l \in L$} \label{alg:checkconfigurations} \label{algstep:takeSet}
				\If{$l$ is a union unobserved end component}  \label{algstep:checkSet}
				\State $C_{end} := l$ \label{algstep:setUEC}
				\State Solve \eqref{pr:mininfleakagemodified} with $C_{end}$, $C_{reach}$, and $\nu_{reach}$. Set the optimal value to $val$ and set the solution to $restimes$. \label{algstep:synpol}
				\If {$val \leq minval$}
				\State $minval:= val$, $minset := l$, $minrestimes := restimes$.
				\EndIf
				\EndIf

			\EndFor
			\State $C_{end} := minset$.
			\State Synthesize the minimum-information admissible $\pi^*$ policy using Algorithm \ref{alg:polsyn2} with  $minrestimes$ and $C_{end}$.
			\State \textbf{return} $\pi^*$
		\end{algorithmic}
	\end{algorithm}
		\begin{algorithm} [t]
		\caption{Synthesis of a stationary, minimum-information admissible policy for UMECs - Process 2} \label{alg:polsyn2}
		\begin{algorithmic}[1]
			\State \textbf{Input:} An MDP $\mathcal{M} = (S, \mathcal{A}, \mathcal{P}, s_0)$, the expected residence times $x^{\pi^*}_{s,a}$ for $\mathcal{M}$, and $C_{end}$.
			\State \textbf{Output:} A stationary, minimum-information admissible policy $\pi^*$ for $\mathcal{M}$.
			\State Synthesize a policy $\pi^{stay}$ such that for a state $s \in C_{end}$, $\sum_{q \in C_{end}} \sum_{a \in \mathcal{A}(s)} \pi^{stay}_{s,a} \ \mathcal{P}_{s,a,q} =1$.
			\ForAll {$s \in S$}
			\If {$\sum_{a \in \mathcal{A}(s)} x^{\pi^*}_{s,a}=0$}
			\ForAll {$a \in \mathcal{A}(s)$}
			\State Set $\pi^*_{s,a}$ arbitrarily between 0 and 1 subject to $\sum_{a' \in \mathcal{A}(s)} = \pi^*_{s,a'} = 1$.
			\EndFor
			\ElsIf {$s \in C_{end}$} 
			\ForAll {$a \in \mathcal{A}(s)$}
		\State $\pi^*_{s,a} := \pi^{stay}_{s,a}$.
			\EndFor
			\Else
			\ForAll {$a \in \mathcal{A}(s)$}
			\State $\pi^*_{s,a}:= \dfrac{x^{\pi^*}_{s,a}}  {\sum_{a' \in \mathcal{A}(s)} x^{\pi^*}_{s,a'}}$ .
			\EndFor
			\EndIf
			\EndFor
			\State \textbf{return} $\pi^*$.
		\end{algorithmic}
	\end{algorithm}

	\begin{remark}
		Note that the size of $R$ is $O(|S|)$ in Algorithm \ref{alg:combinatorial} and the size of $L$ is $O(2^{|S|})$. Checking whether a set of states $S$ is a union unobserved end component has $O(|S|^3|\mathcal{A}|)$ complexity. Hence, the exhaustive search given in Algorithm \ref{alg:combinatorial} increases the complexity by a factor of $O(2^{|S|} |S|^3 |A|)$.
	\end{remark}
	
	\subsection{Agents with Nonstationary Policies} 
 \label{appendix:nonstationarymec}
In this section, we remove Assumption \ref{assumption:unobservedmec} and introduce an algorithm that avoids the exhaustive search given in Algorithm \ref{alg:combinatorial}. The exhaustive search is required as a drawback of stationary policies. We extend the policy space of the agent to find the optimal policy with lower computational complexity by allowing the agent to pick a policy that might be nonstationary for the unobserved states. We call a policy $\pi$ observation stationary if it is stationary at the observed states and define $\Pi^{Obs \ St}(\mathcal{M})$ as set of the observation stationary policies of $\mathcal{M}$.   

The new algorithm is based on the flow constraints that describe the policy space of the agent. Under Assumption \ref{assumption:unobservedmec}, the flow constraints given in \eqref{cons:positiveactions} - \eqref{cons:reach} disallow outflow from the observed maximal end components to the other states. We remove this assumption and allow outflow from UMECs. 

To find the minimum-information admissible policy we first create a modified MDP. The modified MDP has two copies of UMECs that are connected to each other with an action called $switch$. For a UMEC, while the original copy is connected to the other states, the duplicate copy is closed. We use the duplicate copies to represent the cases where the agent decides to stay in the UMEC.  

For MDP $\mathcal{M} = (S,\mathcal{A},\mathcal{P},s_0)$, we create a modified MDP $\bar{\mathcal{M}} = (\bar{S},\bar{\mathcal{A}},\bar{\mathcal{P}},s_0)$ as follows. Let $C_{end}$ be the set of states that belong to some UMEC of $\mathcal{M}$. For each $s \in C_{end}$, we create a duplicate state $\bar{s}$. Let $\bar{C}_{end}$ be the set of duplicate UMEC states. We define $\bar{S} := S \cup \bar{C}_{end}$. For all $s \in S $,  we define $\bar{\mathcal{A}}(s) := \mathcal{A}(s)$ and for all $a \in \bar{\mathcal{A}}(s)$, $q \in S$ we define $\bar{\mathcal{P}}_{s,a,q} := \mathcal{P}_{s,a,q}$. The duplicate state $\bar{s}$ has the action $a$ if and only if $a \in \mathcal{A}(s)$ and $\sum_{q \in C_{end}} \mathcal{P}_{s,a,q} = 1$. For every $\bar{s} \in \bar{C}_{end}$, $\bar{q} \in \bar{C}_{end}$, and $a \in \bar{\mathcal{A}}(\bar{s})$, we let $\bar{\mathcal{P}}_{\bar{s},a,\bar{q}} = \mathcal{P}_{s,a,q}$. For every state $s \in C_{end}$, we also add a new action $switch$ to $\bar{\mathcal{A}}(s)$ such that $\bar{\mathcal{P}}_{s,switch,\bar{s}} = 1$. 

Note that by definition $C_{reach}$ belongs to $C_{end}$. For the reachability constraint, we use the set of duplicate states $\bar{C}_{reach}$. For modified MDP $\bar{\mathcal{M}}$ and $\bar{C}_{end}$, we find the expected residence times of a minimum-information admissible policy with the following optimization problem

	\begin{subequations}
	\label{pr:mininfleakagemodified3}
	\begin{align}
	\inf \ &  \sum_{w \in W } x^{\pi}_{w} \iota^{\pi}_{w}  \label{eq:mininfrestime3}
	\\
	\normalfont \text{s. t. } \ &  x^{\pi}_{s} = \sum_{a \in \mathcal{A}(s)} x^{\pi}_{s,a}, & &  \forall s \in \bar{S} \setminus \bar{C}_{end} \label{cons:restimedefs3}
	\\ 
	&  x^{\pi}_{s,a} \geq 0, & &  \forall s \in \bar{S} \setminus \bar{C}_{end}, \	\forall a \in \bar{\mathcal{A}}(s) \label{cons:positiveactions3}
	\\
	& \sum_{a \in \bar{\mathcal{A}}(s)} x^{\pi}_{s,a} - \sum_{q \in S}  \sum_{a \in \bar{\mathcal{A}}(q)} x^{\pi}_{q,a}\mathcal{P}_{q,a,s} = \mathds{1}_{s_0}(s), & & \forall s \in \bar{S} \setminus \bar{C}_{end} \label{cons:floweqn3}
	\\
	& \sum_{q \in \bar{C}_{reach}} \ \sum_{s \in \bar{S} \setminus \bar{C}_{end}}  \sum_{a \in \bar{\mathcal{A}}(s)} x^{\pi}_{s,a}\mathcal{P}_{s,a,q}  + \mathds{1}_{s_0}(q) \geq \nu_{reach}, \label{cons:reach3}
	\end{align}
\end{subequations}
and synthesize the optimal policy $\bar{\pi}^*$.

\begin{remark}
The optimization problem given in $\eqref{pr:mininfleakagemodified3}$ does not include the policies that always stay in $C_{end}$, in the feasible set. However, we remark that it does not effect the optimality of the solution since the value of such a policy can also be achieved by a policy that enters and always stays in $\bar{C}_{end}$.
\end{remark}

 We describe the policy $\pi^*$ of the agent in the original MDP with Algorithm \ref{alg:polsyn3}. We use a memory element $switched$ that is $True$ if and only if $switch$ action is taken previously. We also synthesize a stationary policy $\pi_{stay}$ for MDP $\mathcal{M}$ that always stays in $C_{end}$. 

\begin{algorithm} [h]
	\caption{Synthesis of a minimum-information admissible policy } \label{alg:polsyn3}
	\begin{algorithmic}[1]
		\State \textbf{Input:} Current state $s$ in $\mathcal{M}$, $switched$,  $\bar{\pi}^*$, $\pi_{stay}$, and $C_{end}$.
		\State \textbf{Output:} The optimal policy $\pi^*$ of $\mathcal{M}$ and  $switched$.
		\If{$switched$}
			\State	$\pi := \pi_{stay}$.
		\ElsIf{$s \not\in C_{end}$}
			\State	$\pi^* := \bar{\pi}^*$.
		\Else
			\State $rnd := Unif[0,1]$.
			\If{$rnd \leq \bar{\pi}^*_{s,switch}$}
			\State $switched:= True$.
			\State $\pi^* = \pi_{stay}$.
			\Else
			\ForAll{$a \in \mathcal{A}(s)$} 
			\State $\pi^*_{s,a} = \dfrac{\bar{\pi}^*_{s,a}}{1-\bar{\pi}^*_{s,switch}}$.
			\EndFor
			\EndIf
		\EndIf
		\State \textbf{return} $\pi^*$ and $switched$
	\end{algorithmic}
\end{algorithm}

Note that the resulting policy is not stationary for the original MDP $\mathcal{M}$. The agent remembers whether it switched to the stay mode in the past. However, it is stationary for all states in $S \setminus C_{end}$. The inference problem is still meaningful since the policy does not change over time for the observed states.  

\begin{proposition} \label{proposition:stationaryisenough}
	For an MDP $\mathcal{M}$, the policy $\pi^*$ that is synthesized via the optimization problem given in \eqref{pr:mininfleakagemodified3} and Algorithm \ref{alg:polsyn3}, is a solution to the following problem
	\begin{align*}
	&\underset{ \pi \in \Pi^{Obs. St}(\mathcal{M})}{ \min}
	& &   \mathbb{E} [ \iota^{\pi}_{W,\xi} ] 
	\\
	& \text{s. t.}
	& & {\Pr} ^{\pi}(Reach[C_{reach}] ) \geq \nu_{reach}
	\end{align*}
	where $\xi$ is a random path generated under policy $\pi$.   
\end{proposition}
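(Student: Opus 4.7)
The plan is to prove Proposition \ref{proposition:stationaryisenough} by a two-sided comparison: show (a) that the policy $\pi^*$ produced by Algorithm \ref{alg:polsyn3} is observation-stationary, is admissible, and satisfies $\mathbb{E}[\iota^{\pi^*}_{W,\xi}]$ equal to the optimal value of \eqref{pr:mininfleakagemodified3}, and (b) that every admissible observation-stationary policy $\pi$ on $\mathcal{M}$ induces a feasible point of \eqref{pr:mininfleakagemodified3} whose objective value equals $\mathbb{E}[\iota^{\pi}_{W,\xi}]$. The two directions together identify the optimum of \eqref{pr:mininfleakagemodified3} with the infimum of the problem posed in the statement, and exhibit $\pi^*$ as an attainer.

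For direction (a), I would verify three facts about the output of Algorithm \ref{alg:polsyn3}. First, $\pi^*$ is observation stationary: since every observed state $w$ satisfies $w \in S \setminus C_{end}$ (by the definition of UMEC), the algorithm assigns $\pi^*_{w,a} = \bar{\pi}^*_{w,a}$, a value independent of time and history. Second, the process on $S \setminus C_{end}$ under $\pi^*$ on $\mathcal{M}$ is distributionally identical to the process on $\bar{S} \setminus \bar{C}_{end}$ under $\bar{\pi}^*$ on $\bar{\mathcal{M}}$: at every non-UMEC state the transition law is identical by construction, and at every UMEC state the \emph{switch} action in $\bar{\mathcal{M}}$ is simulated by the $rnd$ draw and $switched$ flag of the algorithm, so the hitting distribution to $\bar{C}_{end}$ matches the hitting distribution to the ``absorbed forever'' event. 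Consequently $x^{\pi^*}_w = x^{\bar{\pi}^*}_w$ and $\iota^{\pi^*}_w = \iota^{\bar{\pi}^*}_w$ for every $w \in W$, and Proposition \ref{proposition:replace} then gives $\mathbb{E}[\iota^{\pi^*}_{W,\xi}] = \sum_{w \in W} x^{\bar{\pi}^*}_w \iota^{\bar{\pi}^*}_w$, which is the optimal value of \eqref{pr:mininfleakagemodified3}. Third, the same coupling argument yields $\Pr^{\pi^*}(Reach[C_{reach}]) = \Pr^{\bar{\pi}^*}(Reach[\bar{C}_{reach}]) \geq \nu_{reach}$, so $\pi^*$ is admissible.

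For direction (b), I would take an arbitrary admissible observation-stationary $\pi$ on $\mathcal{M}$ with finite expected total information and construct a feasible solution of \eqref{pr:mininfleakagemodified3} that realizes the same objective. Setting $x^{\bar{\pi}}_{s,a} := x^{\pi}_{s,a}$ for all $s \in S \setminus C_{end}$ and $a \in \mathcal{A}(s)$ automatically reproduces the objective via Proposition \ref{proposition:replace}, because $W \subseteq S \setminus C_{end}$. For each $s \in C_{end}$, I would assign the \emph{switch}-action flow equal to the expected number of times that, under $\pi$, the agent enters the UMEC at state $s$ and is never thereafter absorbed in $C_{reach}$ via an exit from the UMEC, and distribute the remaining flow among the original actions so as to match the flow $\pi$ induces inside the UMEC. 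The balance equation \eqref{cons:floweqn3} and the reachability constraint \eqref{cons:reach3} then follow from the balance equations and reachability probability attached to $\pi$ on $\mathcal{M}$.

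The main obstacle is the last construction: translating a potentially history-dependent intra-UMEC behavior under $\pi$ into stationary \emph{switch}-probability flows on $\bar{\mathcal{M}}$ while simultaneously matching reachability to $C_{reach}$ and flow across each UMEC boundary. This reduces to the standard occupation-measure characterization for finite MDPs \cite{etessami2007multi}: the polytope of joint distributions over ``exit state of the UMEC'' and ``eventual absorption in the UMEC'' that is realizable by history-dependent policies coincides with the polytope realizable by stationary policies augmented with an exit/stay Bernoulli at each state. Because the objective $\mathbb{E}[\iota^{\pi}_{W,\xi}]$ is a function only of the observed-state occupation measure, which lives entirely in $S \setminus C_{end}$, this substitution preserves the objective and the reachability constraint, completing direction (b) and thereby the proof.
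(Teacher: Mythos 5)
Your proposal is correct and follows essentially the same route as the paper: both arguments hinge on the switch-gadget MDP $\bar{\mathcal{M}}$, the occupation-measure (flow) characterization of observation-stationary policies, the fact that UMECs are unobserved so the objective depends only on the occupation measure over $S \setminus C_{end}$, and the simulation of $\bar{\pi}^*$ on $\mathcal{M}$ via the $switched$ flag, with your two-sided comparison (a)+(b) being a reorganization of the paper's three steps. One small caution in your direction (b): when $\pi$ makes a UMEC recurrent with positive probability, the internal occupation measure is infinite, so you cannot literally ``distribute the remaining flow among the original actions so as to match the flow $\pi$ induces inside the UMEC''; instead, as the paper does, route the permanent-stay mass to the $switch$ action (possible by strong connectivity of the UMEC), which leaves the objective and the reachability probability unchanged precisely because the UMEC is unobserved.
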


	\section{} \label{appendix:proofs}

		\begin{proof}[Proof of Proposition \ref{proposition:replace}]
		We first consider two cases:
		\begin{itemize}
			\item $\iota^{\pi}_w = \infty$ for a reachable state $w \in W$. 
			\item $\iota^{\pi}_w > 0$ for a state $w \in W$ and $w$ is recurrent under policy $\pi$.
		\end{itemize}
		
		Assume that the first case is possible. Since the path fragments of $\mathcal{M}^{\pi}$ that end with $w$ has a positive probability and $\iota^{\pi}_w = \infty$, the expected total information must be infinite. Thus, the first case is not possible. Assume that the second case is possible. Since the paths of $\mathcal{M}^{\pi}$ that visit $w$ infinitely often has a positive probability and $\iota^{\pi}_w > 0$, the expected total information must be infinite. The second case is also not possible. Hence, all observed states must be unreachable or must leak finite information and be transient.
		
		The expected total information of a transient or unreachable state $w \in W$ is
		\begin{subequations}
			\begin{align}
			\mathbb{E}[\iota^{\pi}_{w,\xi}] &= \sum_{n = 0}^{\infty} \Pr(N_{w, \xi} = n) n \iota^{\pi}_w
			\\
			&= \mathbb{E}[N_{w, \xi}] \iota^{\pi}_w
			\\
			&= x^{\pi}_w \iota^{\pi}_w
			\end{align}
		\end{subequations}
		where $N_{w, \xi}$ is the random variable that is the number of appearances of $w$ in $\xi$.
		
		The expected total information is 
		\begin{subequations}
			\begin{align}
			\mathbb{E}[\iota^{\pi}_{W,\xi}] &= \sum_{w \in W} \mathbb{E}[\iota^{\pi}_{w,\xi}]
			\\
			&= \sum_{w \in W} x^{\pi}_w \iota^{\pi}_w.
			\end{align}
		\end{subequations}
	\end{proof}
	
		\begin{proof}[Sketch of Proof for Proposition \ref{proposition:infmin}]
		If the optimal value of \eqref{pr:mininfleakagemodified} is infinite then any policy that satisfies the reachability constraints is the optimal policy. Otherwise, let $M$ be the optimal value of \eqref{pr:mininfleakagemodified}.
		$\iota^{\pi}_{s}$ given in \eqref{eq:restimeprob} is a lower semicontinuous function in the domain $x^{\pi}_{s,a} \geq 0$ where $a \in \mathcal{A}(s)$. The objective function of \eqref{pr:mininfleakagemodified} is a sum of lower semicontinuous functions and thus is a lower semicontinuous function in domain $x^{\pi}_{s,a} \geq 0$ for all $s \in S \setminus C_{end}$ and $a \in \mathcal{A}(s)$. For every $x^{\pi}_{w,a}$, that satisfies $x^{\pi}_{w} \iota^{\pi}_{w} \leq M$, is bounded where $w \in W$. Also every $x^{\pi}_{s,a}$ is bounded since a state $s\in S \setminus C_{end}$ must be transient. With the constraints \eqref{cons:positiveactions}-\eqref{cons:reach} the feasible region is a compact set. Since a lower semicontinuous function attains its infimum on a compact set, we conclude that the proposition holds.
	\end{proof}

	Before we proceed to the proof of Proposition \ref{proposition:cvx}, we give the following lemma that will be used in the proof.
\begin{lemma} \label{lemma:recisconvex}
	If $f:V \to \mathbb{R}$ is a positive, concave function where $V \subset \mathbb{R}^n$ is a convex set, then $\dfrac{1}{f(x)}$ is a convex function on $V$.
\end{lemma}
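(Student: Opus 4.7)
The plan is to verify convexity of $1/f$ directly from the definition, by composing two standard facts: concavity of $f$ and convexity of the scalar reciprocal $t \mapsto 1/t$ on $(0, \infty)$.

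First I would fix arbitrary $x, y \in V$ and $\lambda \in [0,1]$ and set $z = \lambda x + (1-\lambda) y$, which lies in $V$ since $V$ is convex. The goal is to establish
\begin{equation*}
\frac{1}{f(z)} \leq \lambda \frac{1}{f(x)} + (1-\lambda) \frac{1}{f(y)}.
\end{equation*}

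Next I would invoke concavity of $f$ to write $f(z) \geq \lambda f(x) + (1-\lambda) f(y)$, noting that the right-hand side is strictly positive because $f > 0$ on $V$. Since the scalar map $t \mapsto 1/t$ is strictly decreasing on $(0, \infty)$, this gives
\begin{equation*}
\frac{1}{f(z)} \leq \frac{1}{\lambda f(x) + (1-\lambda) f(y)}.
\end{equation*}
Finally, I would apply the well-known convexity of $t \mapsto 1/t$ on $(0, \infty)$ to the pair $a = f(x), b = f(y)$, obtaining
\begin{equation*}
\frac{1}{\lambda a + (1-\lambda) b} \leq \lambda \frac{1}{a} + (1-\lambda) \frac{1}{b}.
\end{equation*}
Chaining these two inequalities yields the desired bound on $1/f(z)$.

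The argument has no real obstacle; the only care needed is to make sure every denominator that appears is strictly positive, which is guaranteed by the positivity hypothesis on $f$ together with $\lambda, 1-\lambda \geq 0$. Convexity of the scalar reciprocal on $(0,\infty)$ can be taken as standard (it follows, for instance, from the second derivative $2/t^3 > 0$), so no further lemma is required.
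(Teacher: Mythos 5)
Your proof is correct, but it takes a different route from the paper's. The paper never touches the definition of convexity directly: it writes $1/f = \exp(-\log f)$, notes that $\log f$ is concave (increasing concave composed with concave), hence $-\log f$ is convex, and then invokes the composition rule that an increasing convex function of a convex function is convex. You instead unfold the inequality by hand: concavity of $f$ plus positivity gives $f(\lambda x + (1-\lambda)y) \geq \lambda f(x) + (1-\lambda) f(y) > 0$, monotonicity of $t \mapsto 1/t$ on $(0,\infty)$ reverses the inequality, and scalar convexity of $1/t$ finishes it. In effect you are proving the composition rule ``convex nonincreasing of concave is convex'' in the special case $g(t)=1/t$, whereas the paper cites two composition rules as black boxes. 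Your version is more elementary and self-contained (the only external facts are about the scalar reciprocal, and you correctly flag that every denominator is strictly positive); the paper's is shorter but asks the reader to recall the composition calculus. Both are complete proofs of the lemma.
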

\begin{proof} [Proof of Lemma \ref{lemma:recisconvex}]
	Since $f$ and $\log$ are concave functions, $\log f$ is a concave function and consequently $-\log f$ is a convex function on $V$. Finally, $\exp(-\log f) = \dfrac{1}{ f}$ is a convex function on $V$, due to convexity of $\exp$ and $-\log f$ on $V$.
\end{proof}

\begin{proof} [Proof of Proposition \ref{proposition:cvx}]
	Let $f_1:Y_1 \to \mathbb{R} $ be a function such that $ f_1(p)= \sum_{i=1}^{n} p_i(1-p_i)$. Clearly $f_1$ is a positive, concave function on the convex domain $Y_1 =\{ p \ | \ p_1, \ldots , p_{n} \geq 0,\ \sum_{i=1}^n p_i = 1, \  \exists i,j \in [n], \ i \neq j , \ p_{i,j} >0 \}$. Let $f_2:Y_1 \to \mathbb{R}$ be a function such that $$f_2 := \dfrac{1}{f_1} = \dfrac{1}{\sum_{i=1}^{n} p_i(1-p_i)}.$$ By Lemma \ref{lemma:recisconvex}, $f_2$ is a convex function on the domain $Y_1$. A perspective function \cite{boyd2004convex} of $f_2$ is $g_1: Y_2 \to \mathbb{R}$ such that 
	
	\begin{align*}
	g_1 \left (x , \sum_{i = 1}^{n} x_i \right ) &= \sum_{i = 1}^{n} x_i f \left ( \dfrac{x}{\sum_{i = 1}^{n} x_i} \right )
	\\
	&= \sum_{i = 1}^{n} x_i f(p) 
	\end{align*}  
	where $$p_i = \dfrac{x_i}{\sum_{i=1}^{n}x_i}.$$ Due to the convexity property of perspective functions \cite{boyd2004convex}, $g_1$ is convex on $$Y_2= \left \lbrace (x , \sum_{i = 1}^{n} x_i) \ 
	| \ x_1, \ldots, x_n \geq 0 , \ \exists i,j \in [n], \  i \neq j, \ x_{i,j} >0  \right\rbrace$$ since $f_2$ is convex on $Y_1$. We eliminate the redundant dimension $\sum_{i=1}^{n} x_i$ and define $g_2: V_1 \to R$ such that $$g_2(x) = g_1 \left (x , \sum_{i = 1}^{n} x_i \right ).$$ $g_2$ is an affine transformation of $g_1$ and convex on  $$V_1= \left \lbrace x \ | \ x_1, \ldots, x_n \geq 0 , \ \exists i,j \in [n], \  i \neq j, \ x_{i,j} >0  \right\rbrace$$
	
	We introduce $V_0 = \left \lbrace x \ | \ x_1 =\ldots = x_n = 0 \right \rbrace$ and $V_{det} = \left \lbrace x \ | \ \exists i \in [n], \ \forall j \in [n], \ i \neq j, \  x_i > 0, \ x_j = 0  \right \rbrace$. Note that $V_0$, $V_1$, and $V_{det}$ are disjoint sets.
	
	Now we define $g:V \to R \cup \lbrace \infty \rbrace$ on $V=V_0 \cup V_1 \cup V_{det}$ such that $g(x) = 0$ if $x \in V_0$, $g(x) = g_1(x)$ if $x \in V_1 $, and $g(x) = \infty$ if $x \in V_{det}$.
	
	Clearly $g_3$ is convex on $V_0$ and $V_{det}$. We check all possible combinations for convexity where $\lambda \in [0,1]$: 
	
	$\bullet$  $\lambda g(v_1) + (1- \lambda) g(v_2) \geq g(\lambda v_1 + (1-\lambda)v_2)$ if $v_1 \in V_{det}$ and $v_2 \in V_0 \cup V_1$,
	
	$\bullet$  $\lambda g(v_1) + (1- \lambda) g(v_2) = g(\lambda Y_1 + (1-\lambda)v_2)$ if $v_1 \in V_0$ and $v_2 \in  V_1$.
	
	Hence $g$ is convex on $V$. 
	
	Now we represent the objective function of \eqref{pr:mininfleakagemodified} using $g$. Without loss of generality assume that the successor states of state $s$ are $q_1, \ldots, q_{|Succ(s)|}$ and the actions at state $s$ are $a_1, \ldots, a_{|\mathcal{A}(s)|}$. Note that $$x^{\pi}_{s} \iota^{\pi}_{s} = g(Px)$$ where $x = [x(s,a_1), \ldots, x(s,a_{|\mathcal{A}(s)|})]^T$ and $P$ is a $|Succ(s)| \times |\mathcal{A}(s)|$ matrix with $(i,j)$-th entry $\mathcal{P}_{s,a_j,q_i}$. 
	
	Since $x^{\pi}_{s} \iota^{\pi}_{s}$ is an affine mapping of $g$, $x^{\pi}_{s} \iota^{\pi}_{s}$ is convex on $T' = \{ x \in \mathbb{R}^{|\mathcal{A}(s)|} \ | \ \sum_{a\in \mathcal{A}(s)} x^{\pi}_{s,a} \mathcal{P}_{s,a,q} \geq 0 \}$ and consequently on $T= \{ x \in \mathbb{R}^{|\mathcal{A}(s)|} \ | \ x^{\pi}_{s,a} \geq 0 \} \subseteq T'$.
	
	The objective function \eqref{eq:mininfrestime} is a sum of convex functions and the constraints in \eqref{pr:mininfleakagemodified} are linear. Therefore, we conclude that the optimization problem is convex.
\end{proof}

	\begin{proof} [Proof of Proposition \ref{proposition:infoisalowerbound}]
	Due to the stochasticity of MDP, we might encounter the cases where the observer has no observation from a state and hence no sample for estimation. For such cases, denote $\sigma_{w, 0}$ for the MSE when there is no sample for estimation. Denote $\sigma_{w, +}$ for the MSE when there is at least one sample for estimation.

	  The MSE of the $q$-th element $(\sigma_w)_q$ is the estimation error for transition probability to the successor state $q$ such that $\sum_{q \in Succ(s)} (\sigma_w)_q = \sigma_w$.
	
	Denote the result of the successor state at time $t$ for the random path $\xi$ by $R_{t, \xi}$ where by definition $R_{-1, \xi} = s_0$ and $N_{w, \xi}$ for the number of times that state $w$ appears in $\xi$. We have
	\begin{subequations}
		\begin{align}
		(\sigma_w)_q  = & \Pr (N_{w, \xi} = 0 | \pi) (\sigma_{w, 0})_q +  \Pr (N_{w, \xi} > 0| \pi) (\sigma_{w, +})_q
		\\
		\geq & {\Pr}^{\pi}(Reach[w]) (\sigma_{w, +})_q 
		\\
		\geq & \dfrac{\Pr^{\pi}(Reach[w])}{I_{\xi | N_{w, \xi} > 0}(\pi_{w,q})}   \label{eq:cramerraoforpos}
		\\
		= &  \dfrac{\Pr^{\pi}(Reach[w])}{\sum_{t=0}^{\infty} I_{R_{\xi} | R_{t-1, \xi}, \ldots, R_{0, \xi} , N_{w, \xi} > 0} ( \mathcal{P}^{\pi}_{w,q} )}   \label{eq:chainruleofinformation}
		\\
		= &  \dfrac{\Pr^{\pi}(Reach[w])}{\sum_{t=0}^{\infty} I_{R_{t,\xi} | R_{t-1, \xi}, N_{w, \xi} > 0} ( \mathcal{P}^{\pi}_{w,q}  ) }    \label{eq:chainismarkovian}
		\\
		= &  \dfrac{ \Pr^{\pi}(Reach[w]) \mathcal{P}^{\pi}_{w,q}  (1- \mathcal{P}^{\pi}_{w,q}  )}{ \sum_{t=0}^{\infty} \Pr (R_{t-1} = w | N_{w, \xi} > 0)  }   
		\\
		= & \dfrac{ \Pr^{\pi}(Reach[w])^2 \mathcal{P}^{\pi}_{w,q}  (1- \mathcal{P}^{\pi}_{w,q}  ) } { \ x^{\pi}_{w} } 
		\end{align}
	\end{subequations}
	where \eqref{eq:cramerraoforpos} is due to Cram\'er-Rao bound, \eqref{eq:chainruleofinformation} is due to chain rule of the Fisher information \cite{zegers2015fisher}, and \eqref{eq:chainismarkovian} is due to Markovian property of paths.
	
	The MSE at state $w$ is bounded such that
	\begin{subequations}
		\begin{align}
		\sigma_w \geq & \sum_{q \in Succ(s)} \dfrac{  \Pr^{\pi}(Reach[w])^2  \mathcal{P}^{\pi}_{w,q}  (1- \mathcal{P}^{\pi}_{w,q} ) } { x^{\pi}_{w} } 
		\\
		=& \dfrac{ \Pr^{\pi}(Reach[w])^2  } { x^{\pi}_{w} \iota^{\pi}_{w}}.
		\end{align}
	\end{subequations}
\end{proof}

	\begin{proof} [Proof of Corollary  \ref{corollary:totalmsebound}]
	Total MSE at state $w$ is 
	\begin{subequations}
		\begin{align}
		\sigma_w \geq& \dfrac{ \Pr^{\pi}(Reach[w])^2 } { \ x^{\pi}_{w} \iota^{\pi}_{w}}.
		\end{align}
	\end{subequations}	
	The total MSE for the set of states $W$ is 
	\begin{subequations}
		\begin{align}
		\sum_{w \in W} \sigma_w \geq& \sum_{w \in W} \dfrac{  \Pr^{\pi}(Reach[w])^2 } { \ x^{\pi}_{w} \ \iota^{\pi}_{w} } \label{eq:actualcramerrao}
		\\
		\geq& \sum_{w \in W} \dfrac{\underset{w' \in W}{\min} \Pr^{\pi}(Reach[w]')^2 } { \ x^{\pi}_{w} \ \iota^{\pi}_{w} } 
		\\
		\geq& \dfrac{\underset{w' \in W}{\min} \Pr^{\pi}(Reach[w]')^2|W|^2} {\sum_{w \in W} \ x^{\pi}_{w} \ \iota^{\pi}_{w} } 
		\\
		= & \dfrac{\underset{w' \in W}{\min} \Pr^{\pi}(Reach[w'])^2 |W|^2 }{\mathbb{E}_{\xi} [ \iota^{\pi}_{W,\xi} ]}.
		\end{align}
	\end{subequations}	
\end{proof}

	\begin{proof}[Sketch of Proof for Proposition \ref{proposition:stationaryisenough}]
	The proof steps are as follows,
	\begin{itemize}
		\item show that a stationary policy $\bar{\pi}^*$ is optimal for modified MDP $\bar{\mathcal{M}}$ among all policies in $\Pi^{Obs. St}(\bar{\mathcal{M}})$,
		\item show that the minimum-information admissible of $\mathcal{M}$ is not lower than $\bar{\mathcal{M}}$,
		\item show that the expected total informations are equal for $\pi^*$ of $\mathcal{M}$ and $\bar{\pi}^*$ of $\bar{\mathcal{M}}$.
	\end{itemize}

	Consider the minimum-information admissible policy $\bar{\pi}^*$ for $\bar{\mathcal{M}}$. For every state $s \in \bar{S}$, we identify whether $\bar{\pi}^*$ makes $s$ recurrent or transient.
	
	Let $(C,D)$ be an original UMEC of $\bar{\mathcal{M}}$ and $\pi^1$ be a policy such that $\Pr^{\pi^1}(s_t \in C \text{ eventually always}) > 0 $. We claim that the expected total information under policy $\pi^1$ can also be achieved by a policy $\pi^2$ such that $\Pr^{\pi^2}(s_t \in C \text{ infinitely often}) = 0 $ since upon deciding to stay in $C$ the agent can first take $switch$ action and then take the same actions in the duplicate UMEC $\bar{C}$. Note that staying in $C$ or $\bar{C}$ does not affect the total information since both UMECs leak no information. Hence, we only look for policies that makes $C_{end}$ transient and $\bar{C}_{end}$ recurrent.

	Let $(C,D)$ be an end component of $\bar{\mathcal{M}}$ such that $C \cap C_{end}=\emptyset$ and there exists $w \in W$ and $w \in C$. We claim that a policy that always stays in $C$ visits an observed state infinitely often and leaks infinite information. If it does not, then there must exist a state $s \in C$ such that $s$ is recurrent and $s \not\in W$. Such a state $s$ must belong to a UMEC, but by construction it is not possible. Hence if there exists a policy that leaks finite information every state $s \in C$ must be transient. Also note that a state that does not belong to an end component must be transient by definition.

	We partition $\bar{S}$ into two sets: transient states $\bar{S} \setminus \bar{C}_{end}$ and recurrent states $\bar{C}_{end}$. Under any policy $\pi \in \Pi^{Obs.St}(\bar{\mathcal{M}})$ that makes the $\bar{C}_{end}$ recurrent and $\bar{S} \setminus \bar{C}_{end}$ transient, we have the flow equation 
	\begin{subequations}
		\begin{align*}
		& \sum_{a \in \mathcal{A}(s)} x^{\pi}_{s,a} - \sum_{q \in \bar{S} \setminus \bar{C}_{end} } \ \sum_{a \in A(q)} x^{\pi}_{q,a}\mathcal{P}_{q,a,s} = \mathds{1}_{s_0}(s),
	& & \forall s \in \bar{S} \setminus \bar{C}_{end}.
		\end{align*}
	\end{subequations}

Since we optimize over the observation stationary policies, the Proposition \ref{proposition:replace} still holds. The optimization problem given in \eqref{pr:mininfleakagemodified3} finds the state-action residence times of the optimal policy subject to the flow equation and the reachability constraint. The stationary policy synthesized via \eqref{alg:polsyn3} yields to the optimal expected residence times and hence is optimal.

	Let 	
	\begin{subequations}
		\label{pr:mininfleakage}
		\begin{align}
		&v^* = \underset{ \pi \in \Pi^{Obs. St}(\mathcal{M})}{\inf} \mathbb{E} [ \iota^{\pi}_{W,\xi} ] 
		\\
		& \text{s. t. }{\Pr} ^{\pi}(Reach[C_{reach}] ) \geq \nu_{reach}
	\end{align}
\end{subequations}
	and
		\begin{subequations}
		\begin{align}
		&\bar{v}^* =  \underset{ \bar{\pi} \in \Pi^{Obs. St}(\bar{\mathcal{M}})}{\inf} \mathbb{E} [ \iota^{\bar{\pi}}_{W,\xi}] 	
		\\
		& \text{s. t. }{\Pr} ^{\pi}(Reach[C_{reach}] ) \geq \nu_{reach}.
		\end{align}
\end{subequations}
Since every $\pi \in \Pi(\mathcal{M})$ is also realizable for $\bar{\mathcal{M}}$ with the same expected total information and reachability probabilities, we have $v^* \geq \bar{v}^*$.

Finally, we note that $\pi^*$ of $\mathcal{M}$ and $\bar{\pi}$ of $\bar{\mathcal{M}}$ yield to the same expected total information $\bar{v}^*$ since the expected residence times and the policies are the same at the observed states for both policies . Consequently, $\pi^*$ is a minimum-information admissible policy of $\mathcal{M}$.
\end{proof}

\end{document}